\documentclass[a4paper,12pt]{article}
\usepackage{amsmath,amsfonts,amssymb,amsthm,graphicx,enumerate,mathrsfs,color}

\newtheorem{definition}{Definition}
\newtheorem{theorem}{Theorem}
\newtheorem{lemma}[theorem]{Lemma}
\newtheorem{proposition}[theorem]{Proposition}

\newtheorem{remark}[theorem]{Remark}

\textwidth 6.0in \textheight 8.05in

\newcommand{\ten}[1]{\mathcal{#1}}
\newcommand{\mt}{$\mathcal{M}$}
\newcommand{\zt}{$\mathcal{Z}$}
\newcommand{\htt}{$\mathcal{H}$}

\title{\mt-Tensors and Nonsingular \mt-Tensors}
\author{
Weiyang Ding\thanks{E-mail: 11110180009@fudan.edu.cn. School of
Mathematical Sciences, Fudan University, Shanghai, 200433, P. R. of
China. This author is supported by 973 Program Project under grant
2010CB327900.} \and Liqun Qi\thanks{E-mail: liqun.qi@polyu.edu.hk.
Department of Applied Mathematics, The Hong Kong Polytechnic
University, Hong Kong. This author is supported by the Hong Kong
Research Grant Council under grants (No. PolyU 501909, 502510,
502111 and 501212).} \and Yimin Wei\thanks{Corresponding author (Y.
Wei). E-mail: ymwei@fudan.edu.cn and yimin.wei@gmail.com. School of
Mathematical Sciences and Shanghai Key Laboratory of Contemporary
Applied Mathematics, Fudan University, Shanghai, 200433, P. R. of
China. This author is supported by the National Natural Science
Foundation of China under grant 11271084.} }
\date{\today}

\begin{document}

\maketitle

\begin{abstract}
  The \textit{M}-matrix is an important concept in matrix theory, and has many applications.
  Recently, this concept has been extended to higher order tensors \cite{Zhang12}.
  In this paper, we establish some important properties of \mt-tensors and nonsingular \mt-tensors. An \mt-tensor is a \zt-tensor.
  We show that a \zt-tensor is a nonsingular \mt-tensor if and only if it is semi-positive. Thus, a nonsingular \mt-tensor has all positive diagonal entries; and an \mt-tensor,
  regarding as the limitation of a series of nonsingular \mt-tensors, has all nonnegative diagonal entries. We introduce even-order monotone  tensors and present their spectral properties.
  In matrix theory, a \textit{Z}-matrix is a nonsingular \textit{M}-matrix if and only if it is monotone. This is no longer true in the case of higher order tensors. We show that an even-order
   monotone \zt-tensor is an even-order nonsingular \mt-tensor but not vice versa. An example of an even-order nontrivial monotone \zt-tensor is also given.

  \bigskip

  {\bf Keywords:} \mt-tensors, nonsingular \mt-tensors, \zt-tensors, semi-positivity, semi-nonnegativity, \htt-tensors, monotonicity, eigenvalues.

  \bigskip

  {\bf AMS subject classifications:} 15A18, 15A69, 65F15, 65F10
\end{abstract}

\newpage

\section{Introduction}

Several topics in multi-linear algebra have attracted considerable attention in recent years, especially on the eigenvalues of tensors \cite{Qi05} and  higher order tensor decompositions \cite{Kolda09}.
Tensors (or hypermatrices) generalize the concept of matrices in linear algebra. The main difficulty in tensor problems is that they are generally nonlinear. Therefore large amounts of results for matrices are
never pervasive for higher order tensors. However there
are still some results preserved in the case of higher order tensors.

As an important example, some properties of spectra (or eigenvalues) in linear algebra remain true to tensors. Qi generalizes the concept of eigenvalues to higher order tensors in \cite{Qi05}
by defining the tensor-vector product as
$$
(\ten{A}x^{m-1})_i = \sum_{i_2,i_3,\cdots,i_m = 1}^n a_{i i_2 \cdots
i_m} x_{i_2} \cdots x_{i_m},
$$
where a multi-array $\ten{A}$ is an $m$-order $n$-dimensional tensor in $\mathbb{C}^{n \times n \times \cdots \times n}$ and $x$ is a vector in $\mathbb{C}^n$. We call $\lambda$ as an eigenvalue of tensor $\ten{A}$,
if there exists a nonzero vector $x \in \mathbb{C}^n$ such that
$$
\ten{A}x^{m-1} = \lambda x^{[m-1]},
$$
where $x^{[m-1]} = (x_1^{m-1},x_2^{m-1},\cdots,x_n^{m-1})^{\rm T}$ denotes the componentwise $(m-1)$-th power of $x$. Further, we call $\lambda$ as an H-eigenvalue, H$^+$-eigenvalue, or H$^{++}$-eigenvalue if $x \in \mathbb{R}^n$,
$x \in \mathbb{R}_+^n$ ($x \geq 0$), or $x \in \mathbb{R}_{++}^n$ ($x > 0$), respectively. Also Qi \cite{Qi05} introduces another kind of eigenvalues for higher order tensors. We call $\lambda$ an E-eigenvalue of tensor $\ten{A}$,
if there exists a nonzero vector $x \in \mathbb{C}^n$ such that
$$
\ten{A}x^{m-1} = \lambda x,\quad  x^{\rm T} x = 1;
$$
and we call $\lambda$ a Z-eigenvalue if $x \in \mathbb{R}^n$. There have been extensive studies and applications of both kinds of eigenvalues for tensors.

\textit{M}-matrices are an important class of matrices and have been well studied (cf. \cite{Berman94}). They are closely related with spectral graph theory, the stationary distribution of Markov chains and the
 convergence of iterative methods for linear systems. Zhang et al. extended \textit{M}-matrices to \mt-tensors in \cite{Zhang12} and studied their properties. The main result in their paper is that every eigenvalue of
 an \mt-tensor has a positive real part, which is the same as the \textit{M}-matrix.

The main motivation of this paper is that there are no less than
fifty equivalent definitions of nonsingular \textit{M}-matrices
\cite{Berman94}. We extend the other two definitions of nonsingular
\textit{M}-matrices, semi-positivity and monotonicity
\cite{Berman94}, to higher order tensors. How can they be
generalized into higher order case? Are they still the equivalent
conditions of higher order nonsingular \mt-tensors?
 What properties of \mt-tensors can they  indicate? We will answer these questions in our paper.

In this way, we obtain some important properties of \mt-tensors and
nonsingular \mt-tensors. An \mt-tensor is a \zt-tensor. We prove
that a \zt-tensor is a nonsingular \mt-tensor if and only if it is
semi-positive. Thus, a nonsingular \mt-tensor has all positive
diagonal entries; and an \mt-tensor, regarding as the limitation of
a series of nonsingular \mt-tensors, has all nonnegative diagonal
entries. We introduce even-order monotone  tensors and establish
their spectral properties. In matrix theory, a \textit{Z}-matrix is
a nonsingular \textit{M}-matrix if and only if it is monotone
\cite{Berman94}. It is no longer true in the case of higher order
tensors. We show that an even-order monotone \zt-tensor is an
even-order nonsingular \mt-tensor but not vice versa. An example of
an even-order nontrivial monotone \zt-tensor is also given.

An outline of this paper is as follows. Some preliminaries about
tensors and \textit{M}-matrices are presented in Section 2. We
investigate semi-positive \zt-tensors and monotone \zt-tensors in
Sections 3 and 4, respectively. We discuss \htt-tensors, an
extension of \mt-tensors, in Section 5. Finally, we draw some
conclusions in the last section.

%%%%%%%%%%%%%%%%%%%%%%%%%%%%%%%%%%%%%%%%%%%%%%%%%%%%%%%%%%%%%%%%%%%%%%%%%%%%%%%%%%%%%%%%%%%%%%%%%%%%%%%%%%%%%%%%%%%%%%%%%%%%%%%%%%%%%%%%%%%%%%%%%%%%%%%%%%%%%%%%%%

\section{Preliminaries}

We present some preliminaries about the Perron-Frobenius theorem for nonnegative tensors and \textit{M}-matrices.

\subsection{Nonnegative Tensor}

Because of the difficulties in studying the properties of a general
tensor, researchers focus on some \emph{structured} tensors. The
nonnegative tensor is one of the most well studied tensors. A tensor
is said to be nonnegative, if all its entries are nonnegative.

The Perron-Frobenius theorem is the most famous result for
nonnegative matrices (cf. \cite{Berman94}), which investigates the
spectral radius of nonnegative matrices. Researchers also propose
the similar results for nonnegative tensors, and refer them as the
Perron-Frobenius theorem for nonnegative tensors. This theorem also
studies the spectral radius of a nonnegative tensor $\ten{B}$,
$$
\rho(\ten{B}) = \max\big\{|\lambda| \,\big|\, \lambda \text{ is an
eigenvalue of } \ten{B} \big\}.
$$

Before stating the Perron-Frobenius theorem, we briefly introduce
the conceptions of irreducible and weakly irreducible tensors.
\begin{definition}[Irreducible Tensor \cite{Chang08}]
A tensor $\ten{B}$ is called \emph{reducible}, if there exists a non-empty proper index subset $I \subset \{1,2,\cdots,n\}$ such that
$$
b_{i_1 i_2 \cdots i_m} = 0,\ \forall i_1 \in I,\ \forall i_2, i_3,\cdots,i_m \notin I.
$$
Otherwise, we say A is \emph{irreducible}.
\end{definition}
\begin{definition}[Weakly Irreducible Nonnegative Tensor \cite{Friedland13}]
We call a nonnegative matrix $GM(\ten{B})$ the \emph{representation}
associated to a nonnegative tensor $\ten{B}$, if the $(i,j)$-th
entry of $GM(\ten{B})$ is defined to be the summation of $b_{i i_2
i_3\cdots i_m}$ with indices $\{i_2,i_3,\cdots,i_m\} \ni j$. We call
a tensor $\ten{B}$ \emph{weakly reducible}, if its representation
$GM(\ten{B})$ is reducible. If $\ten{B}$ is not weakly reducible,
then it is called \emph{weakly irreducible}.
\end{definition}

Now with these conceptions, we can recall several results of the
Perron-Frobenius theorem for nonnegative tensors that we will use in
this paper.
\begin{theorem}[The Perron-Frobenius Theorem for Nonnegative
Tensors]\label{P-F}
If $\ten{B}$ is a nonnegative tensor of order
$m$ and dimension $n$, then $\rho(\ten{B})$ is an eigenvalue of
$\ten{B}$ with a nonnegative eigenvector $x \in \mathbb{R}_+^n$
\emph{(\cite{Yang10})}.

If furthermore $\ten{B}$ is strictly nonnegative, then $\rho(\ten{B})>0$ \emph{(\cite{Hu11})}.

If furthermore $\ten{B}$ is weakly irreducible, then $\rho(\ten{B})$ is an eigenvalue of $\ten{B}$ with a positive eigenvector $x \in \mathbb{R}_{++}^n$ \emph{(\cite{Friedland13})}.

Suppose that furthermore $\ten{B}$ is irreducible. If $\lambda$ is an eigenvalue with a nonnegative eigenvector, then $\lambda = \rho(\ten{B})$ \emph{(\cite{Chang08})}.
\end{theorem}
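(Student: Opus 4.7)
The plan is to handle the four assertions of the theorem in sequence, since each is attributed to a different source. A unified strategy based on the Collatz--Wielandt characterization of $\rho(\ten{B})$ together with a perturbation/compactness argument should deliver all four.

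First I would prove the foundational existence statement. For entry-wise positive $\ten{B}$, the normalized map $x \mapsto (\ten{B}x^{m-1})^{[1/(m-1)]}/\|(\ten{B}x^{m-1})^{[1/(m-1)]}\|_1$ is a continuous self-map of the positive simplex, so Brouwer's fixed-point theorem produces a positive eigenvector. The associated eigenvalue is identified with $\rho(\ten{B})$ via the two-sided Collatz--Wielandt inequality
$$
\min_{i:\, x_i > 0} \frac{(\ten{B}x^{m-1})_i}{x_i^{m-1}} \le \rho(\ten{B}) \le \max_{i:\, x_i > 0} \frac{(\ten{B}x^{m-1})_i}{x_i^{m-1}},
$$
which holds for every nonnegative nonzero $x$. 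For a general nonnegative $\ten{B}$, I would perturb by $\epsilon\ten{J}$, where $\ten{J}$ is the all-ones tensor, apply the positive case to obtain an $\ell^1$-normalized Perron pair $(\rho(\ten{B}+\epsilon\ten{J}), x_\epsilon)$, and extract a subsequential limit as $\epsilon \to 0^+$; continuity of the spectral radius on the nonnegative cone then closes the argument.

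For strict nonnegativity, every row of $\ten{B}$ has at least one positive entry, so $(\ten{B}\mathbf{1}^{m-1})_i > 0$ for all $i$, and the Collatz--Wielandt functional evaluated at $x=\mathbf{1}$ yields a strictly positive lower bound on $\rho(\ten{B})$. For the weakly irreducible case, I would take any nonnegative $\rho$-eigenvector $x$ with support $S = \{i : x_i > 0\}$ and inspect the eigenvalue equation at coordinates $i \notin S$: the right-hand side vanishes, which forces each summand $b_{i\,i_2\cdots i_m}x_{i_2}\cdots x_{i_m}$ on the left to vanish. By the definition of $GM(\ten{B})$, this translates into the statement that row $i$ of $GM(\ten{B})$ has no nonzero entry in any column indexed by $S$, so $S$ is $GM(\ten{B})$-invariant; weak irreducibility then forces $S = \{1,\ldots,n\}$, hence $x > 0$. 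For the final assertion, the Collatz--Wielandt inequality immediately gives $\lambda \le \rho(\ten{B})$ for any nonnegative eigenpair $(\lambda, x)$; equality is extracted by comparing $x$ with a positive Perron eigenvector via a ratio-minimization argument that uses full (not merely weak) irreducibility and the degree-$(m-1)$ homogeneity of the tensor-vector product.

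The main obstacle, I expect, is the weakly irreducible step: unlike in the matrix case, positivity propagation along the digraph of $GM(\ten{B})$ is not immediate from the eigenvalue equation because of the multilinear structure, and one must carefully trace which index combinations contribute to each entry of the representation matrix. The equality $\lambda = \rho(\ten{B})$ under full irreducibility also requires a more delicate argument than in the matrix case, because the standard ratio-minimization must be adapted to the higher-order homogeneity rather than pulled through a linear action.
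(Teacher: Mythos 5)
The paper gives no proof of this theorem: it is presented purely as a preliminary, with each of the four assertions cited to the literature ([Yang10], [Hu11], [Friedland13], [Chang08]). So there is nothing in the paper to compare your argument against; what I can do is check whether your sketch would actually deliver the stated result.

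Three of the four parts are fine in outline, but your treatment of the weakly irreducible assertion contains a genuine gap. From the eigenvalue equation at a coordinate $i \notin S$ (where $S = \{j : x_j > 0\}$ is the support of a nonnegative $\rho$-eigenvector) you may conclude that each term $b_{i\,i_2\cdots i_m}\,x_{i_2}\cdots x_{i_m}$ vanishes, hence $b_{i\,i_2\cdots i_m} = 0$ whenever \emph{all} of $i_2,\ldots,i_m$ lie in $S$; a single index outside $S$ already kills the product $x_{i_2}\cdots x_{i_m}$, so nothing is learned about such tuples. This is precisely the condition that $\ten{B}$ is \emph{reducible} with reducing set $I = S^c$ in the sense of Definition 1, not the condition that $GM(\ten{B})$ is reducible. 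For $GM(\ten{B})_{ij} = 0$ with $i \in S^c$, $j \in S$ you would need $b_{i\,i_2\cdots i_m} = 0$ for \emph{every} tuple with $j$ among $i_2,\ldots,i_m$, including tuples whose other indices lie outside $S$ -- and your argument says nothing about those. Since weak irreducibility is strictly weaker than irreducibility (a tensor can be weakly irreducible yet reducible, e.g. $b_{112}=b_{211}=1$, all other off-diagonal entries zero, for $m=3$, $n=2$), the propagation argument you sketch only proves the conclusion under the stronger hypothesis of irreducibility, not under the stated one. The Friedland--Gaubert--Han result has to be obtained differently, by producing a positive eigenvector directly (a nonlinear Perron--Frobenius or fixed-point argument on the open simplex), rather than by showing that every nonnegative eigenvector has full support. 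Your first part (Brouwer plus $\varepsilon$-perturbation), second part (Collatz--Wielandt lower bound at $\mathbf{1}$), and fourth part (ratio argument under full irreducibility, where the positivity-propagation step \emph{is} valid) are consistent with the cited sources, though the two-sided Collatz--Wielandt inequality for general nonnegative tensors is itself one of the results of [Yang10] and should be invoked rather than treated as elementary.
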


\subsection{\textit{M}-Matrix}\label{mmatrix}

\textit{M}-matrix arises frequently in scientific computations, and
we briefly introduce its definition and properties in this
section.\\

A matrix is called a \emph{\textit{Z}-matrix} if all its
off-diagonal entries are non-positive. It is apparent that a
\textit{Z}-matrix $A$ can be written as  \cite{Berman94}
$$
A = sI-B,
$$
where $B$ is a nonnegative matrix ($B \geq 0$) and $s > 0$; When $s
\geq \rho(B)$, we call $A$ as an \emph{\textit{M}-matrix}; And
further when $s > \rho(B)$, we call $A$ as a \emph{nonsingular
\textit{M}-matrix} \cite{Berman94}.\\

There are more than fifty conditions in the literature that are equivalent to the definition of nonsingular \textit{M}-matrix.
We just list eleven of them here, which will be involved in our paper (cf. \cite{Berman94}). \\

If $A$ is a \textit{Z}-matrix, then the following conditions are equivalent:

\begin{enumerate}[({C}1)]
  \item $A$ is a nonsingular \textit{M}-matrix;
  \item $A+D$ is nonsingular for each nonnegative diagonal matrix $D$;
  \item Every real eigenvalue of $A$ is positive;
  \item $A$ is \emph{positive stable}; that is, the real part of each eigenvalue of $A$ is positive;
  \item $A$ is \emph{semi-positive}; that is, there exists $x > 0$ with $Ax > 0$;
  \item There exists $x \geq 0$ with $Ax > 0$;
  \item $A$ has all positive diagonal entries and there exists a positive diagonal matrix $D$ such that $AD$ is strictly diagonally dominant;
  \item $A$ has all positive diagonal entries and there exists a positive diagonal matrix $D$ such that $DAD$ is strictly diagonally dominant;
  \item $A$ is \emph{monotone}; that is, $Ax \geq 0$ implies $x \geq 0$;
  \item There exists an inverse-positive matrix $B$ and a nonsingular \textit{M}-matrix $C$ such that $A=BC$;
  \item $A$ has a \emph{convergent regular splitting}; that is, $A$ has a representation $A=M-N$, where $M^{-1} \geq 0$, $N \geq 0$, and $\rho(M^{-1}N)<1$;
  \item $\cdots \cdots \cdots$
\end{enumerate}

\subsection{\mt-Tensor}

Zhang et al. defined the \mt-tensor following the definition of
\textit{M}-matrix \cite{Zhang12}. In this section, we will introduce
their results for \mt-tensors. First, they define the \zt-tensor and
\mt-tensor as follows. We call a tensor the unit tensor and denote
it $\ten{I}$, if all of its diagonal entries are 1 and all of its
off-diagonal entries are 0.

\begin{definition}[\zt-tensor]
We call a tensor $\ten{A}$ as a \emph{\zt-tensor}, if all of its
off-diagonal entries are non-positive, which is equivalent to write
$\ten{A} = s \ten{I} - \ten{B}$, where $s>0$ and $\ten{B}$ is a
nonnegative tensor ($\ten{B} \geq 0$).
\end{definition}

\begin{definition}[\mt-tensor]
We call a \zt-tensor $\ten{A} = s \ten{I} - \ten{B}$ ($\ten{B} \geq 0$) as an \emph{\mt-tensor} if $s \geq \rho(\ten{B})$; We call it as a \emph{nonsingular \mt-tensor} if $s > \rho(\ten{B})$.
\end{definition}

The main results in their paper show two equivalent conditions for the definition of nonsingular \mt-tensor, which are the extensions of the conditions (C3) and (C4). If $\ten{A}$ is a \zt-tensor,
then the following conditions are equivalent:
\begin{enumerate}[({D}1)]
  \item $\ten{A}$ is a nonsingular \mt-tensor;
  \item Every real eigenvalue of $\ten{A}$ is positive;
  \item The real part of each eigenvalue of $\ten{A}$ is positive.
\end{enumerate}

\subsection{Notation}

We adopt the following notation in this paper. The calligraphy
letters $\mathcal{A},\mathcal{B},\mathcal{D},\ldots$ denote the
tensors; the capital letters $A,B,D,\ldots$ represent the matrices;
the lowercase letters $a, x, y,\ldots$ refer to the vectors; and the
Greek letters $\alpha,\beta,\lambda,\ldots$ designate the scalars.
Usually, the tensors in our paper are of order $m$ and dimension
$n$. When we write $\mathcal{A} \geq 0$, $A \geq 0$, or $x \geq 0$,
we mean that every entry of $\mathcal{A}$, $A$, or $x$ is
nonnegative; when we write $\mathcal{A} > 0$, $A > 0$, or $x > 0$,
we mean that every entry of $\mathcal{A}$, $A$, or $x$ is positive.

The product of a tensor $\ten{A} \in \mathbb{R}^{n \times n \times \cdots \times n}$ and a matrix $X \in \mathbb{R}^{n \times n}$ on mode-$k$ \cite{Kolda09} is defined as
$$
(\ten{A} \times_k X)_{i_1 \cdots j_k \cdots i_m} = \sum_{i_k = 1}^{n} a_{i_1 \cdots i_k \cdots i_m} x_{i_k j_k};
$$
then denote
$$
\ten{A}X^{m-1} = \ten{A} \times_2 X \cdots \times_m X,\text{ and } X_1^{\rm T} \ten{A} X_2^{m-1} = \ten{A} \times_1 X_1 \times_2 X_2 \cdots \times_m X_2;
$$
Finally, we introduce the ``composite'' of a diagonal tensor $\ten{D}$ and another tensor $\ten{A}$ as
$$
(\ten{D} \ten{A})_{i_1 i_2 \cdots i_m} = d_{i_1 i_1 \cdots i_1} \cdot a_{i_1 i_2 \cdots i_m},
$$
which indicates that $(\ten{D} \ten{A}) x^{m-1} =
\ten{D}\Big((\ten{A}
x^{m-1})^{\left[\frac{1}{m-1}\right]}\Big)^{m-1}$; And the
``inverse'' of a diagonal tensor $\ten{D}$ as \cite{Kolda09},
$$
(\ten{D}^{-1})_{i_1 i_1 \cdots i_1} = d_{i_1 i_1 \cdots i_1}^{-1}, \text{ and otherwise 0},
$$
which indicates that $(\ten{D}^{-1} \ten{D}) x^{m-1} = (\ten{D} \ten{D}^{-1}) x^{m-1} = \ten{I} x^{m-1} = x^{[m-1]}$.

%%%%%%%%%%%%%%%%%%%%%%%%%%%%%%%%%%%%%%%%%%%%%%%%%%%%%%%%%%%%%%%%%%%%%%%%%%%%%%%%%%%%%%%%%%%%%%%%%%%%%%%%%%%%%%%%%%%%%%%%%%%%%%%%%%%%%%%%%%%%%%%%%%%%%%%%%%%%%%%%%%

\section{Semi-Positivity and Semi-Nonnegativity}
In this section, we will propose an equivalent definition of nonsingular \mt-tensors following the conditions (C5) and (C6) in Section \ref{mmatrix}.

\subsection{Definitions}

First, we extend the \emph{semi-positivity} \cite{Berman94} from
matrices to tensors.

\begin{definition}[Semi-positive tensor]
We call a tensor $\ten{A}$ as a \emph{semi-positive tensor}, if there exists $x > 0$ such that $\ten{A} x^{m-1} > 0$.
\end{definition}

Because of the continuity of the tensor-vector product on the
entries of the vector, the requirement $x > 0$ in the first
definition can be relaxed into $x \geq 0$. We verify this statement
as follows.

\begin{theorem}
A tensor $\ten{A}$ is semi-positive if and only if there exists $x \geq 0$ such that $\ten{A} x^{m-1} > 0$.
\end{theorem}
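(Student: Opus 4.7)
The plan is to observe that one direction is immediate and to handle the other by a small perturbation argument exploiting continuity of the tensor-vector product.

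First, I would dispose of the ``only if'' direction in one line: if $\ten{A}$ is semi-positive then by definition there exists $x>0$ with $\ten{A}x^{m-1}>0$, and $x>0$ trivially yields $x\geq 0$, so the stated condition holds.

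For the ``if'' direction, suppose $x\geq 0$ satisfies $\ten{A}x^{m-1}>0$. The idea is to nudge $x$ into the strictly positive orthant without destroying the strict positivity of $\ten{A}x^{m-1}$. Concretely, I would set $y_\epsilon := x + \epsilon e$, where $e=(1,1,\ldots,1)^{\rm T}$, so that $y_\epsilon>0$ for every $\epsilon>0$. The map
\[
y \;\longmapsto\; \ten{A}y^{m-1}
\]
is a polynomial (hence continuous) map from $\mathbb{R}^n$ to $\mathbb{R}^n$ in the entries of $y$, because each component $(\ten{A}y^{m-1})_i=\sum_{i_2,\ldots,i_m} a_{i i_2\cdots i_m}y_{i_2}\cdots y_{i_m}$ is a homogeneous polynomial of degree $m-1$ in $y$. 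Therefore $\ten{A}y_\epsilon^{m-1}\to \ten{A}x^{m-1}$ entrywise as $\epsilon\to 0^+$.

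Since $\ten{A}x^{m-1}>0$ componentwise and the set $\{v\in\mathbb{R}^n : v>0\}$ is open, there exists $\epsilon_0>0$ such that $\ten{A}y_{\epsilon_0}^{m-1}>0$. Taking $x':=y_{\epsilon_0}>0$ furnishes the strictly positive vector required by the original definition, so $\ten{A}$ is semi-positive. There is no real obstacle here: the only thing to check is that the perturbation direction $e$ preserves positivity (which is immediate since $x\geq 0$ and $\epsilon e>0$), and that the argument does not require any structural assumption on $\ten{A}$ beyond what is given.
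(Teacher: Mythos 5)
Your proof is correct and takes essentially the same approach as the paper: both directions are handled by the observation that the tensor-vector product is continuous, so a small perturbation of $x$ into the open positive orthant preserves the strict positivity of $\ten{A}x^{m-1}$. The only cosmetic differences are that you perturb along the all-ones direction $e$ (which is slightly cleaner than the paper's choice of perturbing only the zero coordinates, and sidesteps a degenerate normalization when $x$ is already strictly positive) and that you work directly with the polynomial map $y \mapsto \ten{A}y^{m-1}$ rather than the paper's normalized map $T_\ten{A}(y) = (\ten{A}y^{m-1})^{[1/(m-1)]}$.
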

\begin{proof}
Define a map $T_\ten{A}(x) = (\ten{A}x^{m-1})^{\left[\frac{1}{m-1}\right]}$, then $x \mapsto T_\ten{A}$ is continuous and bounded \cite{Chang11}.

If $\ten{A}$ is semi-positive, then it is trivial that there is $x \geq 0$ such that $\ten{A} x^{m-1} > 0$ according to the definition.

If there exists $x \geq 0$ with $\ten{A} x^{m-1} > 0$, then there
must be a closed ball $B\big(T_\ten{A}(x),\varepsilon\big)$ in
$\mathbb{R}_{++}^n$, where $B(c,r) := \{ v \, \big|\, \|v-c\| \leq r
\}$. Since $T_\ten{A}$ is continuous, there exists $\delta>0$ such
that $T_\ten{A}(y) \in B\big(T_\ten{A}(x),\varepsilon\big)$ for all
$y \in B(x,\delta)$. Let $d$ be a zero-one vector with $d_i = 1$ if
$x_i = 0$ and $d_i = 0$ if $x_i > 0$. Take $y = x +
\frac{\delta}{\|d\|}d \in B(x,\delta)$. Then $y > 0$ and
$T_\ten{A}(y) > 0$. Therefore $\ten{A}$ is semi-positive.
\end{proof}

It is well known that a \textit{Z}-matrix is a nonsingular
\textit{M}-matrix if and only if it is semi-positive
\cite{Berman94}. Furthermore, we will come to a similar  conclusion
for nonsingular \mt-tensors.

\begin{theorem}\label{thm1}
A \zt-tensor is a nonsingular \mt-tensor if and only if it is semi-positive.
\end{theorem}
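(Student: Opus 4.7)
The statement is an equivalence, so I would prove the two directions separately, and both hinge on perturbing $\mathcal{B}$ to a strictly positive tensor in order to invoke the Perron--Frobenius Theorem~\ref{P-F} with a \emph{positive} Perron eigenvector. Let $\mathcal{E}$ denote the all-ones tensor of the appropriate order and dimension and write $\mathcal{A} = s\mathcal{I} - \mathcal{B}$ with $\mathcal{B} \geq 0$.

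For the ``only if'' direction, assume $\mathcal{A}$ is a nonsingular \mt-tensor, so $s > \rho(\mathcal{B})$. Since the spectral radius depends continuously on the entries of a nonnegative tensor, I can pick $\varepsilon > 0$ small enough that $\rho(\mathcal{B}_\varepsilon) < s$, where $\mathcal{B}_\varepsilon := \mathcal{B} + \varepsilon \mathcal{E} > 0$. Because $\mathcal{B}_\varepsilon$ is strictly positive it is weakly irreducible, so Theorem~\ref{P-F} supplies an $x \in \mathbb{R}^n_{++}$ with $\mathcal{B}_\varepsilon x^{m-1} = \rho(\mathcal{B}_\varepsilon)\, x^{[m-1]}$. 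Then
\[
\mathcal{A} x^{m-1} = s x^{[m-1]} - \mathcal{B} x^{m-1} = \bigl(s-\rho(\mathcal{B}_\varepsilon)\bigr) x^{[m-1]} + \varepsilon \mathcal{E} x^{m-1} > 0,
\]
which is precisely the definition of semi-positivity.

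For the ``if'' direction, suppose $\mathcal{A}$ is semi-positive: there exists $x > 0$ with $s x^{[m-1]} > \mathcal{B} x^{m-1}$, i.e.\ $(\mathcal{B} x^{m-1})_i < s x_i^{m-1}$ for every $i$. By continuity in $\varepsilon$, the same strict inequalities persist for $\mathcal{B}_\varepsilon = \mathcal{B} + \varepsilon\mathcal{E}$ when $\varepsilon > 0$ is sufficiently small. Since $\mathcal{B}_\varepsilon > 0$ is weakly irreducible, Theorem~\ref{P-F} produces a positive Perron eigenvector $y$ with $\mathcal{B}_\varepsilon y^{m-1} = \rho(\mathcal{B}_\varepsilon)\, y^{[m-1]}$. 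The key estimate is then the Collatz--Wielandt style bound
\[
\rho(\mathcal{B}_\varepsilon) \;=\; \min_i \frac{(\mathcal{B}_\varepsilon y^{m-1})_i}{y_i^{m-1}} \;\leq\; \max_i \frac{(\mathcal{B}_\varepsilon x^{m-1})_i}{x_i^{m-1}} \;<\; s,
\]
which one obtains by comparing $\mathcal{B}_\varepsilon$ applied to $x$ against the eigenvector $y$ (using monotonicity of the nonnegative tensor-vector product and picking a ratio that minimizes $x_i/y_i$). Because $0 \leq \mathcal{B} \leq \mathcal{B}_\varepsilon$ implies $\rho(\mathcal{B}) \leq \rho(\mathcal{B}_\varepsilon)$, this yields $\rho(\mathcal{B}) < s$, so $\mathcal{A}$ is a nonsingular \mt-tensor.

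The main obstacle is the ``if'' direction, because Theorem~\ref{P-F} only guarantees a positive Perron eigenvector under weak irreducibility, which $\mathcal{B}$ itself need not satisfy. The perturbation $\mathcal{B}_\varepsilon = \mathcal{B} + \varepsilon\mathcal{E}$ circumvents this, but it forces me to justify (i) that the strict inequality $\mathcal{B}x^{m-1} < s x^{[m-1]}$ is preserved for small $\varepsilon$, (ii) a Collatz--Wielandt upper bound on $\rho(\mathcal{B}_\varepsilon)$ via an arbitrary positive test vector $x$, and (iii) monotonicity of $\rho$ under entrywise tensor inequalities. Each of these facts is standard for nonnegative tensors, so the argument goes through cleanly.
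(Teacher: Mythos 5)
Your proof is correct, but you take a genuinely different route from the paper for the hard direction. The paper splits Theorem \ref{thm1} into Lemma \ref{lemma4} and Lemma \ref{lemma5}. For ``semi-positive $\Rightarrow$ nonsingular \mt-tensor'' the paper applies the Collatz--Wielandt bound $\rho(\ten{B}) \leq \max_i (\ten{B}x^{m-1})_i / x_i^{m-1}$ for $x>0$ directly to the witnessing vector, with no perturbation; your detour through $\ten{B}_\varepsilon = \ten{B} + \varepsilon\ten{E}$ and its Perron vector $y$ essentially re-derives that bound rather than invoking it, so you end up in the same place by a longer path. The real divergence is in the ``nonsingular \mt-tensor $\Rightarrow$ semi-positive'' direction: the paper invokes the block-triangular decomposition of a weakly reducible nonnegative tensor (Hu et al.), handles each weakly irreducible diagonal block via Proposition \ref{prop3}, and glues the blocks together by induction with carefully chosen small scalings $\varepsilon x_t$; you instead perturb $\ten{B}$ to the strictly positive (hence weakly irreducible) $\ten{B}_\varepsilon$, use continuity of the spectral radius to keep $\rho(\ten{B}_\varepsilon) < s$, and read off a positive vector from the Perron eigenvector of $\ten{B}_\varepsilon$. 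Your argument is shorter and more conceptual, at the cost of leaning on continuity of $\rho$ as a function of the entries of a nonnegative tensor --- a true but nontrivial fact that you should cite explicitly (it is not in the version of the Perron--Frobenius theorem recorded as Theorem \ref{P-F}), whereas the paper's induction is self-contained modulo the structural decomposition lemma. One small presentational gap: your chain $\rho(\ten{B}_\varepsilon) = \min_i (\ten{B}_\varepsilon y^{m-1})_i / y_i^{m-1} \leq \max_i (\ten{B}_\varepsilon x^{m-1})_i / x_i^{m-1}$ compares a min over $y$ to a max over $x$ without the intermediate step; the honest argument picks $i_0$ achieving $\min_i x_i/y_i$ and uses monotonicity of $\ten{B}_\varepsilon$ to get $(\ten{B}_\varepsilon x^{m-1})_{i_0} \geq \rho(\ten{B}_\varepsilon) x_{i_0}^{m-1}$, which you gesture at in parentheses but should spell out.
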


The proof of this theorem will be presented at the end of this
section after studying some properties of semi-positive \zt-tensors.

\subsection{Semi-Positive \zt-Tensors}

The first property is about the diagonal entries of a semi-positive \zt-tensor.
\begin{proposition}\label{prop2}
A semi-positive \zt-tensor has all positive diagonal entries.
\end{proposition}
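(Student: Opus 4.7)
The plan is straightforward and essentially unwinds the definitions. I would start from a positive vector $x>0$ witnessing semi-positivity, so $\mathcal{A}x^{m-1}>0$, and then examine a single component $(\mathcal{A}x^{m-1})_i$ for an arbitrary index $i$.

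The key step is to split the sum
\[
(\mathcal{A}x^{m-1})_i = a_{ii\cdots i}\, x_i^{m-1} + \sum_{(i_2,\ldots,i_m)\neq (i,\ldots,i)} a_{i i_2\cdots i_m}\, x_{i_2}\cdots x_{i_m}
\]
into its diagonal term and its off-diagonal terms. Because $\mathcal{A}$ is a $\mathcal{Z}$-tensor, every off-diagonal coefficient $a_{i i_2\cdots i_m}$ (with $(i_2,\ldots,i_m)\neq(i,\ldots,i)$) is non-positive, and because $x>0$ every monomial $x_{i_2}\cdots x_{i_m}$ is positive. Hence the off-diagonal sum is $\leq 0$.

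Combining this with $(\mathcal{A}x^{m-1})_i>0$ gives $a_{ii\cdots i}\, x_i^{m-1} > 0$, and since $x_i>0$ forces $x_i^{m-1}>0$, we conclude $a_{ii\cdots i}>0$. Running this argument over all $i$ finishes the proof.

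There is no real obstacle here: the statement is a direct consequence of the sign conventions built into the definitions of a $\mathcal{Z}$-tensor and of semi-positivity. The only thing to be slightly careful about is that we must use the strict positivity of the witness $x$ (guaranteed by the definition, or by the preceding theorem which allows us to reduce from $x\geq 0$ to $x>0$) so that $x_i^{m-1}>0$ and the off-diagonal monomials are genuinely positive; everything else is bookkeeping.
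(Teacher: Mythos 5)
Your proof is correct and is essentially identical to the paper's: both start from a positive witness $x>0$, split $(\mathcal{A}x^{m-1})_i$ into the diagonal term $a_{ii\cdots i}x_i^{m-1}$ plus the non-positive off-diagonal sum, and conclude $a_{ii\cdots i}>0$ from $x_i^{m-1}>0$.
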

\begin{proof}
When $\ten{A}$ is a semi-positive \zt-tensor, there exists $x > 0$ such that $\ten{A}x^{m-1} > 0$. Consider $\ten{A}x^{m-1}$, we have
$$
(\ten{A} x^{m-1})_i = a_{i i \cdots i} x_i^{m-1} + \sum_{(i_2,i_3,\cdots,i_m) \neq (i,i,\cdots,i)} a_{i i_2 \cdots i_m} x_{i_2} \cdots x_{i_m} > 0,
$$
for $i = 1,2,\cdots,n$. From $x_j > 0$ and $a_{i i_2 i_3\cdots i_m} \leq 0$ for $(i_2,i_3,\cdots,i_m) \neq (i,i,\cdots,i)$, we can conclude that $a_{i i \cdots i} > 0$ for $i = 1,2,\cdots,n$.
\end{proof}

Moreover we have a series of equivalent conditions of semi-positive \zt-tensors, following the conditions (C7), (C8), (C10), and (C11) in Section \ref{mmatrix}.

\begin{proposition}
A \zt-tensor $\ten{A}$ is semi-positive if and only if $\ten{A}$ has all positive diagonal entries and there exists a positive diagonal matrix $D$ such that $\ten{A}D^{m-1}$ is strictly diagonally dominant.
\end{proposition}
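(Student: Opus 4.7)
The plan is to prove both directions by making explicit the arithmetic identity that links the strict diagonal dominance of $\ten{A}D^{m-1}$ to the positivity of $\ten{A}d^{m-1}$, where $d=(d_1,\dots,d_n)^{\rm T}$ is the diagonal of $D$. The key observation is that for a \zt-tensor with positive diagonal entries and a positive vector $d>0$, one has $|a_{ii\cdots i}d_i^{m-1}| = a_{ii\cdots i}d_i^{m-1}$ and $|a_{ii_2\cdots i_m}d_{i_2}\cdots d_{i_m}| = -a_{ii_2\cdots i_m}d_{i_2}\cdots d_{i_m}$ for each off-diagonal index tuple. Therefore the strict diagonal dominance inequality
$$
a_{ii\cdots i}d_i^{m-1} > \sum_{(i_2,\dots,i_m)\neq(i,\dots,i)} \bigl(-a_{ii_2\cdots i_m}\bigr) d_{i_2}\cdots d_{i_m}
$$
rearranges into exactly the inequality $(\ten{A}d^{m-1})_i>0$. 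Thus the two conditions are equivalent once the positive-diagonal hypothesis is in hand.

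For the forward direction ($\Rightarrow$), assume $\ten{A}$ is semi-positive. Proposition \ref{prop2} immediately supplies the positive diagonal entries. Take the positive vector $x>0$ guaranteed by semi-positivity with $\ten{A}x^{m-1}>0$, and set $D=\mathrm{diag}(x_1,\dots,x_n)$. The $(i,i_2,\dots,i_m)$-entry of $\ten{A}D^{m-1}$ is $a_{ii_2\cdots i_m}x_{i_2}\cdots x_{i_m}$, so the inequality $(\ten{A}x^{m-1})_i>0$ reads
$$
a_{ii\cdots i}x_i^{m-1} + \sum_{(i_2,\dots,i_m)\neq(i,\dots,i)} a_{ii_2\cdots i_m}x_{i_2}\cdots x_{i_m}>0,
$$
and invoking the sign identities above turns this into the desired strict diagonal dominance of $\ten{A}D^{m-1}$.

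For the reverse direction ($\Leftarrow$), take the positive diagonal matrix $D$ with strictly diagonally dominant $\ten{A}D^{m-1}$, and let $d$ be its diagonal. Using the positivity of the diagonal entries of $\ten{A}$ and the $\mathcal{Z}$-structure to remove the absolute values in the dominance inequality, I obtain $(\ten{A}d^{m-1})_i>0$ for every $i$, hence $\ten{A}$ is semi-positive by definition (with witness $x=d>0$).

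I expect no substantial obstacle: the whole argument is a sign bookkeeping exercise, and the only conceptual ingredient beyond the definitions is the use of Proposition \ref{prop2} in the forward direction to supply the positivity of the diagonal entries. The delicate point to present carefully is the handling of the absolute values in the strict diagonal dominance condition, making explicit that it is precisely the non-positivity of the off-diagonal entries and the positivity of the diagonal entries that allow the dominance inequality to be reinterpreted as componentwise positivity of $\ten{A}d^{m-1}$.
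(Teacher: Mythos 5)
Your proof is correct and follows essentially the same route as the paper: both directions hinge on setting $D=\mathrm{diag}(x)$ (or $x=\mathrm{diag}(D)$), using Proposition \ref{prop2} to supply positive diagonal entries, and observing that the $\mathcal{Z}$-structure lets the absolute values in the strict-diagonal-dominance inequality be removed so that it becomes exactly $(\ten{A}x^{m-1})_i>0$.
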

\begin{proof}
Let $D = \mathrm{diag}(d_1,d_2,\cdots,d_n)$. Then $\ten{A}D^{m-1}$ is strictly diagonally dominant means
$$
\big|a_{i i \cdots i} d_i^{m-1}\big| > \sum_{(i_2,i_3,\cdots,i_m)
\neq (i,i,\cdots,i)} \big|a_{i i_2 \cdots i_m} d_{i_2} \cdots
d_{i_m}\big|,\quad i = 1,2,\cdots,n.
$$

If $\ten{A}$ is a semi-positive \zt-tensor, then we know that $a_{i i \cdots i} > 0$ for $i = 1,2,\cdots,n$ from Proposition \ref{prop2}, $a_{i i_2 \cdots i_m} \leq 0$ for
 $(i_2,i_3,\cdots,i_m) \neq (i,i,\cdots,i)$, and there is $x>0$ with $\ten{A}x^{m-1}>0$. Let $D = \mathrm{diag}(x)$, we can easily conclude that $D$ is positive diagonal and $\ten{A}D^{m-1}$ is strictly diagonally dominant.

If $\ten{A}$ has all positive diagonal entries, and there exists a
positive diagonal matrix $D$ such that $\ten{A}D^{m-1}$ is strictly
diagonally dominant, let $x = \mathrm{diag}(D) > 0$, then
$\ten{A}x^{m-1}>0$ since $a_{i i \cdots i} > 0$ for $i =
1,2,\cdots,n$ and $a_{i i_2 \cdots i_m} \leq 0$ for $(i_2,i_3,
\cdots,i_m) \neq (i,i,\cdots,i)$. Thus $\ten{A}$ is a semi-positive
tensor.
\end{proof}

\begin{proposition}
A \zt-tensor $\ten{A}$ is semi-positive if and only if $\ten{A}$ has
all positive diagonal entries and there exist two positive diagonal
matrices $D_1$ and $D_2$ such that $D_1 \ten{A} D_2^{m-1}$ is
strictly diagonally dominant.
\end{proposition}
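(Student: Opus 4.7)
The plan is to reduce this statement to the preceding proposition (the case $D_1 = I$) by observing that left-multiplication by a positive diagonal tensor preserves both the \zt-tensor structure and semi-positivity, so the extra factor $D_1$ contributes essentially nothing new.

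For the forward implication, suppose $\ten{A}$ is a semi-positive \zt-tensor. By Proposition \ref{prop2} every diagonal entry of $\ten{A}$ is positive, and by the preceding proposition there exists a positive diagonal matrix $D$ such that $\ten{A}D^{m-1}$ is strictly diagonally dominant. Choosing $D_1 = I$ and $D_2 = D$ produces the required decomposition, so this direction is immediate.

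For the backward implication, assume $\ten{A}$ has all positive diagonal entries and that $D_1 \ten{A} D_2^{m-1}$ is strictly diagonally dominant for some positive diagonal matrices $D_1,D_2$. The key observation is that $D_1 \ten{A}$ is again a \zt-tensor with positive diagonal entries: from the ``composite'' definition in Section 2,
\[
(D_1 \ten{A})_{i_1 i_2 \cdots i_m} = (D_1)_{i_1 i_1} \cdot a_{i_1 i_2 \cdots i_m},
\]
and since $(D_1)_{i_1 i_1} > 0$, the sign pattern of $\ten{A}$ is preserved. Applying the preceding proposition to the \zt-tensor $D_1 \ten{A}$ with the positive diagonal matrix $D_2$ yields that $D_1 \ten{A}$ is semi-positive, so there exists $x > 0$ with $(D_1 \ten{A}) x^{m-1} > 0$. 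Finally, because $\bigl((D_1 \ten{A}) x^{m-1}\bigr)_i = (D_1)_{ii} \cdot (\ten{A} x^{m-1})_i$ and every $(D_1)_{ii}$ is strictly positive, we conclude $\ten{A} x^{m-1} > 0$, proving that $\ten{A}$ itself is semi-positive.

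The only real subtlety is bookkeeping around the composite $D_1 \ten{A}$: one must verify that scaling the ``rows'' by positive entries both preserves the off-diagonal sign pattern and commutes (up to a componentwise positive rescaling) with the tensor-vector product $x \mapsto \ten{A} x^{m-1}$. Once this is checked, the result follows at once from the previous proposition, so I would not expect any serious obstacle.
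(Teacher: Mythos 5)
Your proof is correct and takes essentially the same route as the paper: both reduce to the preceding proposition by noting that the positive row scaling $D_1$ is harmless. The paper cancels $D_1$ directly from the strict diagonal dominance inequality, whereas you carry it along as a new \zt-tensor $D_1\ten{A}$ and strip it off at the end; the difference is purely cosmetic.
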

\begin{proof}
Notice that $D_1 \ten{A} D_2^{m-1}$ is strictly diagonally dominant if and only if $\ten{A}D_2^{m-1}$ is strictly diagonally dominant in sake of the positivity of $D_1$'s diagonal entries.
Therefore this proposition is a direct corollary of Proposition 5.
\end{proof}

\begin{proposition}
A \zt-tensor $\ten{A}$ is semi-positive if and only if there exists a positive diagonal tensor $\ten{D}$ and a semi-positive \zt-tensor $\ten{C}$ with $\ten{A} = \ten{D}\ten{C}$.
\end{proposition}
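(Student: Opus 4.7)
The proposition has a trivial half and a content half; I would handle them separately using the pointwise formula $\bigl((\ten{D}\ten{C})x^{m-1}\bigr)_i = d_{ii\cdots i}\cdot(\ten{C}x^{m-1})_i$, which is immediate from the definition of the composite $(\ten{D}\ten{C})_{i_1\cdots i_m}=d_{i_1 i_1\cdots i_1}\,c_{i_1 i_2\cdots i_m}$.

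For the ``if'' direction, I would start from a decomposition $\ten{A}=\ten{D}\ten{C}$ with $\ten{D}$ a positive diagonal tensor and $\ten{C}$ a semi-positive \zt-tensor. First I would check that $\ten{A}$ is itself a \zt-tensor: the off-diagonal entries of $\ten{A}$ equal $d_{ii\cdots i}\,c_{i i_2\cdots i_m}$, and since $d_{ii\cdots i}>0$ and $c_{i i_2\cdots i_m}\le 0$ for all $(i_2,\ldots,i_m)\ne(i,\ldots,i)$, these products are non-positive. Then, picking $x>0$ with $\ten{C}x^{m-1}>0$ (which exists by semi-positivity of $\ten{C}$), the identity above yields $(\ten{A}x^{m-1})_i=d_{ii\cdots i}(\ten{C}x^{m-1})_i>0$ for every $i$, so $\ten{A}$ is semi-positive.

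For the ``only if'' direction, the simplest choice works: take $\ten{D}=\ten{I}$ (the unit tensor, which is a positive diagonal tensor) and $\ten{C}=\ten{A}$. Since $(\ten{I}\ten{A})_{i_1 i_2\cdots i_m}=1\cdot a_{i_1 i_2\cdots i_m}=a_{i_1 i_2\cdots i_m}$, we have $\ten{A}=\ten{D}\ten{C}$, and by hypothesis $\ten{C}=\ten{A}$ is a semi-positive \zt-tensor. If a less degenerate factorization is desired, one can instead invoke Proposition~\ref{prop2} to get $a_{ii\cdots i}>0$ for all $i$, let $\ten{D}$ be the diagonal tensor with $d_{ii\cdots i}=a_{ii\cdots i}$, and set $\ten{C}=\ten{D}^{-1}\ten{A}$, which is a \zt-tensor with unit diagonal, and is semi-positive on the same witness $x$ because the identity above inverts.

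There is no real obstacle here; the only thing to be careful of is to verify, in the ``if'' direction, that the product $\ten{D}\ten{C}$ inherits the $\ten{Z}$-tensor sign pattern (so that the statement is internally consistent) and that semi-positivity passes through the composite via the simple scalar identity rather than via the more delicate formula $(\ten{D}\ten{A})x^{m-1}=\ten{D}\bigl((\ten{A}x^{m-1})^{[1/(m-1)]}\bigr)^{m-1}$.
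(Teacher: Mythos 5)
Your proof is correct. In the ``if'' direction your scalar identity $\bigl((\ten{D}\ten{C})x^{m-1}\bigr)_i=d_{ii\cdots i}(\ten{C}x^{m-1})_i$ is exactly what the paper's formula $(\ten{D}\ten{C})x^{m-1}=\ten{D}\bigl((\ten{C}x^{m-1})^{[1/(m-1)]}\bigr)^{m-1}$ reduces to once $\ten{D}$ is diagonal, and it is the cleaner way to write it since it never takes $(m-1)$-th roots of possibly negative components. In the ``only if'' direction you depart slightly from the paper: the paper always factors through the diagonal part of $\ten{A}$, implicitly relying on Proposition~\ref{prop2} to know that $\ten{D}$ has positive diagonal, whereas you first observe that the trivial choice $\ten{D}=\ten{I}$, $\ten{C}=\ten{A}$ already witnesses the existential claim, and only afterwards offer the paper's factorization as a less degenerate alternative. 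Both are valid; the trivial choice makes transparent that the real content of the proposition lives in the ``if'' direction. The \zt-sign-pattern check you add in the ``if'' direction is not logically required, since the proposition already assumes $\ten{A}$ is a \zt-tensor, but it is a harmless consistency check on the statement.
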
\begin{proof}
Let $\ten{D}$ be the diagonal tensor of $\ten{A}$ and $\ten{C}=\ten{D}^{-1}\ten{A}$. Clearly, $\ten{A} = \ten{D}\ten{C}$.

If $\ten{A}$ is semi-positive \zt-tensor, then $\ten{D}$ is positive diagonal and there exists $x > 0$ with $\ten{A}x^{m-1} > 0$. Then the vector
$\ten{C}x^{m-1} = \ten{D}^{-1} \Big((\ten{A} x^{m-1})^{\left[\frac{1}{m-1}\right]}\Big)^{m-1}$ is also positive. So $\ten{C}$ is also a semi-positive \zt-tensor.

If $\ten{C}$ is a semi-positive \zt-tensor and $\ten{D}$ is positive diagonal, then there exists $x > 0$ with $\ten{C}x^{m-1} > 0$. Then the vector
$\ten{A}x^{m-1} = \ten{D} \Big((\ten{A} x^{m-1})^{\left[\frac{1}{m-1}\right]}\Big)^{m-1}$ is also positive. Thus $\ten{A}$ is a semi-positive \zt-tensor.
\end{proof}

\begin{remark} After we prove Theorem \ref{thm1}, Proposition 7 can be restated as: A \zt-tensor $\ten{A}$ is a nonsingular \mt-tensor if and only if there exists a positive diagonal tensor $\ten{D}$
and a nonsingular \mt-tensor $\ten{C}$ with $\ten{A} = \ten{D}\ten{C}$.
\end{remark}

\begin{proposition}
A \zt-tensor $\ten{A}$ is semi-positive if and only if there exists a positive diagonal tensor $\ten{D}$ and a nonnegative tensor $\ten{E}$ such that $\ten{A} = \ten{D} - \ten{E}$ and there exists $x>0$
with $(\ten{D}^{-1}\ten{E})x^{m-1} < x^{[m-1]}$.
\end{proposition}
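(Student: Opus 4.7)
The plan is to use the same diagonal--off-diagonal splitting that appeared implicitly in Proposition 7, and then to observe that the inequality $(\ten{D}^{-1}\ten{E})x^{m-1} < x^{[m-1]}$ is, after clearing the positive diagonal scaling, exactly the same as $\ten{A} x^{m-1} > 0$.

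For the forward direction, I would take a semi-positive \zt-tensor $\ten{A}$ and set $\ten{D}$ to be the diagonal tensor whose entries on the diagonal are those of $\ten{A}$ and $\ten{E} := \ten{D} - \ten{A}$. By Proposition \ref{prop2}, $\ten{A}$ has positive diagonal, so $\ten{D}$ is a positive diagonal tensor and is invertible. Since $\ten{A}$ is a \zt-tensor, its off-diagonal entries are non-positive, hence the off-diagonal entries of $\ten{E}$ are nonnegative; the diagonal entries of $\ten{E}$ are zero by construction, so $\ten{E}\geq 0$. By semi-positivity, pick $x>0$ with $\ten{A}x^{m-1} > 0$. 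Componentwise this reads $d_{ii\cdots i}\,x_i^{m-1} > (\ten{E} x^{m-1})_i$, and dividing the $i$-th inequality by $d_{ii\cdots i} > 0$ gives exactly
\[
\big((\ten{D}^{-1}\ten{E})x^{m-1}\big)_i = \frac{(\ten{E} x^{m-1})_i}{d_{ii\cdots i}} < x_i^{m-1} = x^{[m-1]}_i,
\]
which is the desired inequality.

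For the converse, suppose such $\ten{D}$, $\ten{E}$ and $x>0$ exist. Using the componentwise identity $((\ten{D}^{-1}\ten{E})x^{m-1})_i = d_{ii\cdots i}^{-1}(\ten{E} x^{m-1})_i$ recorded in the Notation subsection and multiplying the strict inequality $(\ten{D}^{-1}\ten{E})x^{m-1} < x^{[m-1]}$ through by the positive numbers $d_{ii\cdots i}$, I obtain $\ten{E} x^{m-1} < \ten{D} x^{m-1}$. Because the tensor-vector product is linear in the tensor argument, $\ten{A} x^{m-1} = \ten{D} x^{m-1} - \ten{E} x^{m-1} > 0$, so $\ten{A}$ is semi-positive by definition.

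There is really no main obstacle here: both directions are a direct unwinding of the composite-tensor notation introduced in Section 2.4 together with the fact, from Proposition \ref{prop2}, that a semi-positive \zt-tensor has positive diagonal (which is what lets us take $\ten{D}$ to be the diagonal of $\ten{A}$ in the forward direction and, more fundamentally, guarantees $\ten{D}^{-1}$ exists). The only point that requires a small sanity check is that the splitting chosen in the forward direction actually yields $\ten{E}\geq 0$, which follows immediately from $\ten{A}$ being a \zt-tensor.
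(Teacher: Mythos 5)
Your proof is correct and follows essentially the same route as the paper's: take $\ten{D}$ to be the diagonal part of $\ten{A}$, set $\ten{E}=\ten{D}-\ten{A}$, and observe that $(\ten{D}^{-1}\ten{E})x^{m-1}<x^{[m-1]}$ is just $\ten{A}x^{m-1}>0$ after scaling by the positive diagonal. You are somewhat more careful than the paper in explicitly checking $\ten{E}\ge 0$ and in treating the converse with an arbitrary admissible pair $(\ten{D},\ten{E})$, but these are points of exposition rather than a different argument.
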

\begin{proof}
Let $\ten{D}$ be the diagonal tensor of $\ten{A}$ and $\ten{E}=\ten{D}-\ten{A}$. Clearly, $\ten{A} = \ten{D}-\ten{E}$ and $\ten{D}^{-1}\ten{E} = \ten{I} - \ten{D}^{-1}\ten{A}$.

If $\ten{A}$ is a semi-positive \zt-tensor, then $\ten{D}$ is positive diagonal and there exists $x > 0$ with $\ten{A}x^{m-1} > 0$. Then $\ten{D}x^{m-1} > \ten{E}x^{m-1}$, and thus $(\ten{D}^{-1}\ten{E})x^{m-1} < x^{[m-1]}$.

If there exists $x>0$ with $(\ten{D}^{-1}\ten{E})x^{m-1} < x^{[m-1]}$, then $\ten{E}x^{m-1} < \ten{D}x^{m-1}$, and thus $\ten{A} x^{m-1} > 0$. Therefore $\ten{A}$ is a semi-positive \zt-tensor.
\end{proof}

\begin{remark} It follows from \cite[Lemma 5.4]{Yang10} that a semi-positive \zt-tensor can be splitted into $\ten{A} = \ten{D} - \ten{E}$,
where $\ten{D}$ is a positive diagonal tensor and $\ten{E}$ is a
nonnegative tensor with $\rho(\ten{D}^{-1}\ten{E}) < 1$.
\end{remark}

\subsection{Examples}

Next we  present some examples of nontrivial semi-positive and
semi-nonnegative \zt-tensors.

\begin{proposition}
A strictly diagonally dominant \zt-tensor with nonnegative diagonal
entries is a semi-positive \zt-tensor.
\end{proposition}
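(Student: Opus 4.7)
The plan is to exhibit an explicit positive vector $x$ witnessing semi-positivity, and the natural candidate is the all-ones vector $e = (1,1,\ldots,1)^{\mathrm{T}}$. Evaluating the tensor-vector product at $e$ gives
\[
(\mathcal{A} e^{m-1})_i \;=\; \sum_{i_2,\ldots,i_m=1}^{n} a_{i i_2 \cdots i_m} \;=\; a_{i i \cdots i} \;+\; \sum_{(i_2,\ldots,i_m) \neq (i,\ldots,i)} a_{i i_2 \cdots i_m},
\]
so the whole task reduces to showing that each of these row-sums is strictly positive.

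Next I would translate the hypotheses into usable signs. Because $\mathcal{A}$ is a \zt-tensor, every off-diagonal entry $a_{i i_2 \cdots i_m}$ is non-positive, so $|a_{i i_2 \cdots i_m}| = -a_{i i_2 \cdots i_m}$ for off-diagonal indices; and because the diagonal entries are assumed nonnegative, $|a_{i i \cdots i}| = a_{i i \cdots i}$. The strict diagonal dominance hypothesis therefore reads
\[
a_{i i \cdots i} \;>\; \sum_{(i_2,\ldots,i_m) \neq (i,\ldots,i)} \bigl(-a_{i i_2 \cdots i_m}\bigr), \qquad i=1,2,\ldots,n.
\]
Rearranging this is exactly $(\mathcal{A} e^{m-1})_i > 0$ for all $i$, which together with $e > 0$ gives the semi-positivity of $\mathcal{A}$ directly from the definition.

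There is essentially no obstacle here; the proof is a one-line sign-tracking argument. The only subtlety worth flagging is the nonnegativity of the diagonal: without it one could not drop the absolute value from $|a_{i i \cdots i}|$ and identify the sum $a_{i i \cdots i} + \sum_{\mathrm{off}} a_{i i_2 \cdots i_m}$ with the positive quantity guaranteed by strict diagonal dominance. (In fact, once Theorem \ref{thm1} is in hand, this result also says that any such tensor is a nonsingular \mt-tensor, recovering the Z-tensor analogue of the classical matrix fact.)
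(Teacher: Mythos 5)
Your proof is correct and follows the same route as the paper: evaluate at the all-ones vector $\mathbf{e}$ and observe that strict diagonal dominance, together with the sign pattern of a \zt-tensor with nonnegative diagonal, is precisely the statement $\mathcal{A}\mathbf{e}^{m-1} > 0$. The only difference is that you spell out the sign-tracking that the paper leaves as ``direct to show.''
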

\begin{proof}
Use ${\bf e}$ to denote the all ones vector. It is direct to show
that a \zt-tensor $\ten{A}$ with all nonnegative diagonal entries is
strictly diagonally dominant is equivalent to $\ten{A}{\bf e}^{m-1}
> 0$. Since ${\bf e} > 0$, the result follows the definition of
semi-positive  \zt-tensors.
\end{proof}

\begin{proposition}\label{prop3}
A weakly irreducible nonsingular \mt-tensor is a semi-positive \zt-tensor.
\end{proposition}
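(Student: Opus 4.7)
The plan is to use the Perron--Frobenius theorem (Theorem~\ref{P-F}) to produce the required positive vector directly, and to exploit the fact that a nonsingular \mt-tensor is, by definition, a strict shift of a nonnegative tensor.

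First I would write $\ten{A} = s\ten{I} - \ten{B}$ with $\ten{B} \geq 0$ and $s > \rho(\ten{B})$, using the definition of a nonsingular \mt-tensor. The first small observation to record is that weak irreducibility of $\ten{A}$ is equivalent to weak irreducibility of $\ten{B}$. This is immediate because $\ten{A}$ and $\ten{B}$ differ only in the diagonal entries, so for any off-diagonal index tuple $(i,i_2,\dots,i_m)$ the entry $a_{i i_2 \cdots i_m} = -b_{i i_2 \cdots i_m}$; thus the zero-pattern of the off-diagonal entries is the same, and the representation matrices $GM(\ten{B})$ and $GM(\,\cdot\,)$ associated with $\ten{A}$ have the same irreducibility behavior.

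Next I would invoke the weakly irreducible case of the Perron--Frobenius theorem (Theorem~\ref{P-F}) applied to $\ten{B}$: there exists a positive vector $x \in \mathbb{R}_{++}^n$ with
$$
\ten{B} x^{m-1} = \rho(\ten{B})\, x^{[m-1]}.
$$
Then a direct computation gives
$$
\ten{A} x^{m-1} = s\,x^{[m-1]} - \ten{B} x^{m-1} = \bigl(s - \rho(\ten{B})\bigr)\, x^{[m-1]}.
$$
Since $s - \rho(\ten{B}) > 0$ and $x > 0$, we obtain $\ten{A} x^{m-1} > 0$ with $x > 0$, which is exactly the definition of semi-positivity.

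There is really no hard step here; the only subtlety is making sure the notion of weak irreducibility, which was defined for nonnegative tensors, transfers cleanly to the \zt-tensor $\ten{A}$ via its nonnegative part $\ten{B}$. Once that is dispatched, the conclusion follows from a one-line application of the Perron--Frobenius positive eigenvector to witness semi-positivity.
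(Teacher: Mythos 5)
Your proof is correct and follows the same route as the paper: decompose $\ten{A} = s\ten{I} - \ten{B}$, transfer weak irreducibility to $\ten{B}$, take the positive Perron--Frobenius eigenvector of $\ten{B}$, and check directly that it witnesses semi-positivity. The only difference is that you spell out why weak irreducibility passes from $\ten{A}$ to $\ten{B}$, which the paper merely asserts.
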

\begin{proof}
When $\ten{A}$ is a nonsingular \mt-tensor, we write $\ten{A} = s
\ten{I} - \ten{B}$, where $\ten{B} \geq 0$ and $s > \rho(\ten{B})$.
Since $\ten{A}$ is weakly irreducible, so is $\ten{B}$. Then there
exists $x>0$ such that $\ten{B}x^{m-1} = \rho(\ten{B}) x^{[m-1]}$
from the Perron-Frobenius Theorem  for nonnegative tensors (cf.
Theorem \ref{P-F}), thus
$$
\ten{A}x^{m-1} = (s-\rho(\ten{B})) x^{[m-1]} > 0.
$$
Therefore $\ten{A}$ is a semi-positive tensor.
\end{proof}

\subsection{Proof of Theorem \ref{thm1}}

Our aim is to prove the equality relation between the following two sets:
$$
\{\text{semi-positive \zt-tensors}\} = \{\text{nonsingular \mt-tensors}\}.
$$
The first step is to verify the ``$\subseteq$'' part, which is relatively simple.

\begin{lemma}\label{lemma4}
A semi-positive \zt-tensor is also a nonsingular \mt-tensor.
\end{lemma}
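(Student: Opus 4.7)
The plan is to unpack the $\mathcal{M}$-tensor definition and show that semi-positivity forces $s > \rho(\mathcal{B})$. Write the given $\mathcal{Z}$-tensor as $\ten{A} = s\ten{I} - \ten{B}$ with $\ten{B} \geq 0$ and $s > 0$. Semi-positivity supplies some $x > 0$ with $\ten{A}x^{m-1} > 0$, which is just the componentwise inequality
$$
\ten{B} x^{m-1} < s\, x^{[m-1]}.
$$
The task is therefore reduced to proving $s > \rho(\ten{B})$, after which the nonsingular $\mathcal{M}$-tensor definition applies directly.

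For this, I would invoke the first part of the Perron--Frobenius theorem stated as Theorem~\ref{P-F}: since $\ten{B} \geq 0$, there exists a nonzero nonnegative vector $y \in \mathbb{R}_+^n$ with $\ten{B} y^{m-1} = \rho(\ten{B}) y^{[m-1]}$. The case $\rho(\ten{B}) = 0$ is trivial from $s > 0$, so assume $\rho(\ten{B}) > 0$. The key trick is to rescale $y$ so it can be compared to $x$ pointwise: set $\alpha = \min\{ x_j / y_j : y_j > 0\}$, which is a positive number because $x > 0$ and $y$ has at least one positive entry, and replace $y$ by $\alpha y$. Since the eigenvalue equation is homogeneous of degree $m-1$, the rescaled vector is still a Perron eigenvector, and now $y \leq x$ componentwise with equality $y_{i_0} = x_{i_0} > 0$ at the index $i_0$ achieving the minimum.

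Comparing the two inequalities at coordinate $i_0$ then closes the argument. By nonnegativity of $\ten{B}$ and $0 \leq y \leq x$,
$$
\rho(\ten{B})\, x_{i_0}^{m-1} \;=\; \rho(\ten{B})\, y_{i_0}^{m-1} \;=\; (\ten{B} y^{m-1})_{i_0} \;\leq\; (\ten{B} x^{m-1})_{i_0} \;<\; s\, x_{i_0}^{m-1},
$$
so $\rho(\ten{B}) < s$, and $\ten{A}$ is a nonsingular $\mathcal{M}$-tensor by definition.

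The main obstacle is that the Perron--Frobenius vector $y$ delivered by Theorem~\ref{P-F} is only guaranteed to be nonnegative, not strictly positive, because we have made no irreducibility assumption on $\ten{B}$; thus one cannot simply substitute $y$ for $x$ in the strict inequality. The scaling step above circumvents this: it forces at least one ``touching'' coordinate $i_0$ where $y_{i_0} = x_{i_0} > 0$, and a single such coordinate is already enough to convert the strict inequality $\ten{B} x^{m-1} < sx^{[m-1]}$ into the strict separation $\rho(\ten{B}) < s$. An alternative approach would perturb $\ten{B}$ to $\ten{B} + \varepsilon\ten{J}$ to gain weak irreducibility and pass to the limit, but the scaling argument is cleaner and uses only the existence part of Theorem~\ref{P-F}.
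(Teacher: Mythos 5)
Your proof is correct, but it takes a genuinely different route from the paper's. The paper's proof is a one-liner that cites the Collatz--Wielandt bound for nonnegative tensors, namely that for any $x>0$ one has $\min_i (\ten{B}x^{m-1})_i/x_i^{m-1} \leq \rho(\ten{B}) \leq \max_i (\ten{B}x^{m-1})_i/x_i^{m-1}$; plugging in the semi-positivity vector then gives $s - \rho(\ten{B}) \geq \min_i (\ten{A}x^{m-1})_i/x_i^{m-1} > 0$ immediately. You instead avoid that citation and rederive exactly the needed half of the bound from first principles: you take the nonnegative Perron eigenvector $y$ whose existence is guaranteed by the first clause of Theorem~\ref{P-F}, rescale it to touch $x$ from below at some coordinate $i_0$, and use monotonicity of $\ten{B}\cdot{}^{m-1}$ on the nonnegative orthant together with the strict inequality $(\ten{B}x^{m-1})_{i_0} < s x_{i_0}^{m-1}$ to conclude $\rho(\ten{B}) < s$. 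Your handling of the $\rho(\ten{B})=0$ case and of the fact that $y$ need not be positive is careful and correct. What the paper's route buys is brevity and a quantitative lower bound on $s - \rho(\ten{B})$; what your route buys is self-containment, using only the existence part of the Perron--Frobenius theorem rather than a separately cited Collatz--Wielandt inequality.
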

\begin{proof}
When $\ten{A}$ is semi-positive, there exists $x > 0$ with $\ten{A} x^{m-1} > 0$. We write $\ten{A} = s \ten{I} - \ten{B}$ since $\ten{A}$ is a \zt-tensor, where $\ten{B} \geq 0$. Then
$$
\min_{1 \leq i \leq n} \frac{(\ten{B}x^{m-1})_i}{x_i^{m-1}} \leq \rho(\ten{B}) \leq \max_{1 \leq i \leq n} \frac{(\ten{B}x^{m-1})_i}{x_i^{m-1}}.
$$
Thus $s - \rho(\ten{B}) \geq s - \displaystyle\max_{1 \leq i \leq n} \frac{(\ten{B}x^{m-1})_i}{x_i^{m-1}} = \min_{1 \leq i \leq n} \frac{(\ten{A}x^{m-1})_i}{x_i^{m-1}} > 0$, since both $x$ and $\ten{A} x^{m-1}$ are positive.
Therefore $\ten{A}$ is a nonsingular \mt-tensor.
\end{proof}

The second step is to prove the ``$\supseteq$'' part employing a partition of general nonnegative tensors into weakly irreducible leading subtensors.

\begin{lemma}\label{lemma5}
A nonsingular \mt-tensor is also a semi-positive \zt-tensor; And an \mt-tensor is also a semi-nonnegative \zt-tensor.
\end{lemma}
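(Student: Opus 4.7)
The strategy is to prove the two halves separately. The nonsingular half is proved by induction on the tensor dimension $n$, splitting on whether $\ten{A}$ is weakly irreducible; the singular half then follows by approximation.

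For the nonsingular assertion, write $\ten{A} = s\ten{I} - \ten{B}$ with $\ten{B} \geq 0$, $s > \rho(\ten{B})$, and induct on $n$ (the base case $n=1$ is trivial). If $\ten{A}$ is weakly irreducible, Proposition \ref{prop3} already delivers a positive $x$ with $\ten{A}x^{m-1} > 0$. Otherwise, reducibility of $GM(\ten{B})$ yields a proper non-empty subset $J \subsetneq \{1,\ldots,n\}$ such that $b_{i i_2 \cdots i_m} = 0$ whenever $i \in J$ and at least one $i_l \notin J$; consequently, for $i \in J$ the product $(\ten{A}x^{m-1})_i$ depends only on $x_J$, so the leading subtensor $\ten{A}_J = s\ten{I}_J - \ten{B}_J$ is ``decoupled'' from its complement. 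Set $K = \{1,\ldots,n\}\setminus J$. A Collatz--Wielandt-type argument applied to Perron eigenvectors of $\ten{B}_J$ and $\ten{B}_K$ (extended by zeros and using Theorem \ref{P-F}) yields $\rho(\ten{B}_J),\rho(\ten{B}_K) \leq \rho(\ten{B}) < s$, so both $\ten{A}_J$ and $\ten{A}_K$ are nonsingular \mt-tensors of strictly smaller dimension. The inductive hypothesis then supplies $x_J > 0$ with $\ten{A}_J x_J^{m-1} > 0$ and $y_K > 0$ with $\ten{A}_K y_K^{m-1} > 0$.

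The two witnesses are assembled into a full $n$-vector $x$ by putting $x_J$ on $J$ and $\lambda y_K$ on $K$, where $\lambda > 0$ is to be chosen. Coordinates in $J$ are positive by decoupling. For $i \in K$ expand
\[
(\ten{A}x^{m-1})_i = \lambda^{m-1}(\ten{A}_K y_K^{m-1})_i + \sum_{\text{cross}} a_{i i_2 \cdots i_m}\, x_{i_2} \cdots x_{i_m},
\]
where the ``cross'' sum collects the terms with at least one index in $J$. Each cross term is nonpositive (off-diagonal entries of $\ten{A}$ are $\leq 0$ and $x > 0$) and is homogeneous of some degree $r < m-1$ in $\lambda$, while the leading term is strictly positive and of degree $m-1$. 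Choosing $\lambda$ large enough makes $(\ten{A}x^{m-1})_i > 0$ for every $i \in K$, completing the induction. For the second assertion, approximate an \mt-tensor $\ten{A} = s\ten{I} - \ten{B}$ by the nonsingular \mt-tensors $\ten{A}_\varepsilon = (s+\varepsilon)\ten{I} - \ten{B}$; by the part just proved each admits $x_\varepsilon > 0$ with $\ten{A}_\varepsilon x_\varepsilon^{m-1} > 0$, and after normalizing $\|x_\varepsilon\| = 1$ and extracting a convergent subsequence as $\varepsilon \downarrow 0$, the limit $x \geq 0$ has unit norm and satisfies $\ten{A}x^{m-1} \geq 0$, which is the desired semi-nonnegativity.

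The main obstacle is the assembly step: a single scalar $\lambda$ must simultaneously dominate every cross term for every $i \in K$. This is controlled by the homogeneity accounting, because all cross terms are of strictly lower $\lambda$-degree than the diagonal piece and are finite in number, while the strict positivity of $(\ten{A}_K y_K^{m-1})_i$ is independent of $\lambda$. A secondary technical point is the spectral-radius monotonicity $\rho(\ten{B}_J) \leq \rho(\ten{B})$ for leading subtensors of nonnegative tensors; this extends the classical matrix fact via Theorem \ref{P-F} and the zero-extension trick.
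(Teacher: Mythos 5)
Your proof of the first assertion (nonsingular \mt-tensor implies semi-positive \zt-tensor) is correct and takes a genuinely different route from the paper's. The paper invokes the structured decomposition of a weakly reducible nonnegative tensor into weakly irreducible blocks $I_1 \sqcup \cdots \sqcup I_k$ with a triangular coupling pattern (from \cite[Theorem 5.2]{Hu11}), inducts on the number $k$ of blocks, and chooses a small scalar $\varepsilon$ to shrink the decoupled block's contribution. You avoid that machinery entirely: from the raw definition of weak reducibility you extract one decoupled subset $J$, induct on the dimension $n$, and scale the coupled block's witness by a large $\lambda$. By homogeneity the two scalings are equivalent, but your route is more self-contained, at the mild cost of needing the spectral-radius monotonicity $\rho(\ten{B}[J]) \leq \rho(\ten{B})$ for leading subtensors, which you correctly obtain by a Collatz--Wielandt argument with the zero-extended Perron vector of $\ten{B}[J]$; the paper uses the same inequality implicitly.

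For the second assertion your approximation argument has a gap that is not repairable, because the assertion itself is false. Normalizing $x_\varepsilon$ and extracting a convergent subsequence gives only $x \geq 0$ with $\|x\| = 1$ and $\ten{A}x^{m-1} \geq 0$, whereas the paper's definition of semi-nonnegativity demands $x > 0$. A limit of strictly positive unit vectors can have zero coordinates, and here it genuinely must in some cases: the paper itself exhibits the counterexample $\ten{A} = 2\ten{I} - \ten{B}$ with $b_{1111}=2$, $b_{1122}=b_{2222}=1$, an \mt-tensor for which $(\ten{A}x^3)_1 = -x_1 x_2^2 < 0$ for every $x>0$, and then states explicitly that ``a semi-nonnegative \zt-tensor must be an \mt-tensor, but the converse is not true.'' The paper's own proof of this lemma in fact only treats the nonsingular case, so the second clause is an unsupported (and, per the paper's own counterexample, incorrect) assertion; you should not expect to close the $x \geq 0$ versus $x > 0$ gap.
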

\begin{proof}
Assume that a nonsingular \mt-tensor $\ten{A} = s\ten{I} - \ten{B}$ is weakly reducible, otherwise we have proved a weakly irreducible nonsingular \mt-tensor is also a semi-positive \zt-tensor in Proposition \ref{prop3}.
Then $\ten{B}$ is also weakly reducible. Following \cite[Theorem 5.2]{Hu11}, the index set $I = \{1,2,\cdots,n\}$ can be partitioned into $I = I_1 \sqcup I_2 \sqcup \cdots \sqcup I_k$ (here $A = A_1 \sqcup A_2$ means that
$A = A_1 \cup A_2$ and $A_1 \cap A_2 = \varnothing$) such that
\begin{enumerate}[(1)]
  \item $\ten{B}_{I_t I_t \cdots I_t}$ is weakly irreducible,
  \item $b_{i_1 i_2 \cdots i_m} = 0$ for $i_1 \in I_t$ and $\{i_2, i_3,\cdots,i_m\} \nsubseteq I_t \sqcup I_{t+1} \sqcup \cdots \sqcup
  I_k$, \quad  $t = 1,2,\cdots,k$.
\end{enumerate}
Without loss of generality, we can assume that
\[
\begin{split}
&I_1 = \{1,2,\cdots,n_1\}, \\
&I_2 = \{n_1+1,n_1+2,\cdots,n_2\}, \\
&\cdots \quad \cdots \quad \cdots \\
&I_k = \{n_{k-1}+1,n_{k-1}+2,\cdots,n\}.
\end{split}
\]

We introduce the following denotations
$$
\ten{B}_{(t,a)} := \ten{B}_{I_t I_{a_1} \cdots I_{a_{m-1}}}
$$
and
$$
\ten{B}_{(t,a)} z_a^{m-1} := \ten{B}_{(t,a)} \times_{a_1} z_{a_1}
\times \cdots \times_{a_{m-1}} z_{a_{m-1}},
$$
where $a$ is an index vector of length $m-1$ and $z_j$'s are column vectors. We also apply $\ten{B}[J]$ to denote the leading subtensor $(b_{i_1 i_2 \cdots i_m})_{i_j \in J}$, where $J$ is an arbitrary index set.
Since $s > \rho(\ten{B}) \geq \rho(\ten{B}[I_t])$, the leading subtensors $s \ten{I} - \ten{B}[I_t]$ are irreducible nonsingular \mt-tensors. Hence they are also semi-positive, that is, there exists $x_t > 0$
with $s x_t^{[m-1]} - \ten{B}[I_t] x_t^{m-1} > 0$ for all $t = 1,2,\cdots,k$.

Consider the leading subtensor $\ten{B}[I_1 \sqcup I_2]$ first. For all vector $z_1$ of length $n_1$ and $z_2$ of length $n_2-n_1$, we write
$$
\ten{B}[I_1 \sqcup I_2] \begin{bmatrix} z_1 \\ z_2 \end{bmatrix}^{m-1} =
\begin{bmatrix}
\ten{B}[I_1] z_1^{m-1} + \sum\limits_{a \neq (1,1,\cdots,1)} \ten{B}_{(1,a)} z_a^{m-1} \\
\ten{B}[I_2] z_2^{m-1}
\end{bmatrix},
$$
where the entries of $a$ only contains $1$ and $2$. Take $z_1 = x_1$ and $z_2 = \varepsilon x_2$, where $\varepsilon \in (0,1)$ satisfies
$$
\varepsilon \cdot \sum_{a \neq (1,1,\cdots,1)} \ten{B}_{(1,a)}
x_a^{m-1} < s x_1^{[m-1]} - \ten{B}[I_1] x_1^{m-1}.
$$
Since $\sum\limits_{a \neq (1,1,\cdots,1)} \ten{B}_{(1,a)} z_a^{m-1}
\leq \varepsilon \Big(\sum\limits_{a \neq (1,1,\cdots,1)}
\ten{B}_{(1,a)} x_a^{m-1}\Big)$, it can be ensured that
$\ten{B}[I_1] z_1^{m-1} + \sum\limits_{a \neq (1,1,\cdots,1)}
\ten{B}_{(1,a)} z_a^{m-1} < s z_1^{[m-1]}$. Therefore we obtain
$$
\begin{bmatrix} x_1 \\ \varepsilon x_2 \end{bmatrix} > 0
\quad\text{and}\quad
s \begin{bmatrix} x_1 \\ \varepsilon x_2 \end{bmatrix}^{[m-1]} \hspace{-5pt}- \ten{B} \begin{bmatrix} x_1 \\ \varepsilon x_2 \end{bmatrix}^{m-1} > 0,
$$
so $\ten{A}[I_1 \sqcup I_2] = s \ten{I} - \ten{B}[I_1 \sqcup I_2]$ is a semi-positive \zt-tensor.

Assume that $\ten{A}[I_1 \sqcup I_2 \sqcup \cdots \sqcup I_t]$ is a
semi-positive \zt-tensor. Consider the leading subtensor
$\ten{A}[I_1 \sqcup I_2 \sqcup \cdots \sqcup I_{t+1}]$ next.
Substituting the index sets
 $I_1$ and $I_2$ above with $I_1 \sqcup I_2 \sqcup \cdots \sqcup I_t$ and $I_{t+1}$, respectively, we can conclude that $\ten{A}[I_1 \sqcup I_2 \sqcup \cdots \sqcup I_{t+1}]$ is also a semi-positive \zt-tensor. Thus by induction,
 we can prove that the weakly reducible nonsingular \mt-tensor $\ten{A}$ is a semi-positive \zt-tensor as well.
\end{proof}

Combining Lemma \ref{lemma4} and Lemma \ref{lemma5}, we finish the
proof of  Theorem \ref{thm1}. Thus all the properties of
semi-positive \zt-tensors we investigate above are the same with
nonsingular \mt-tensors, and vice versa. The semi-positivity can be
employed to study the nonsingular \mt-tensors afterwards.

\subsection{General \mt-Tensors}

We discuss the nonsingular \mt-tensors above, moreover the general
\mt-tensors are also useful. The examples can be found in the
literature. For instance, the Laplacian tensor $\ten{L}$ of a
hypergraph (cf. \cite{Hu13a,Hu13b,Hu13c,Qi13}) is an \mt-tensor but
is not a nonsingular \mt-tensor.

An \mt-tensor can be written as $\ten{A} = s\ten{I} - \ten{B}$,
where $\ten{B}$ is nonnegative and $s \geq \rho(\ten{B})$. It is
easy to verify that the tensor $\ten{A}_\varepsilon = \ten{A} +
\varepsilon \ten{I}$ ($\varepsilon > 0$) is then a nonsingular
\mt-tensor and $\ten{A}$ is the limitation of a series of
$\ten{A}_\varepsilon$ when $\varepsilon \rightarrow 0$. Since all
the diagonal entries of a semi-positive \zt-tensor, i.e., a
nonsingular \mt-tensor, are positive, therefore the diagonal entries
of a general \mt-tensor, as the limitation of a series of
nonsingular \mt-tensor, must be nonnegative. Thus we prove the
following proposition.

\begin{proposition}
A general \mt-tensor has all nonnegative diagonal entries.
\end{proposition}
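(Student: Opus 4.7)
The plan is to follow the perturbation argument that the paragraph preceding the proposition already sketches, and just pin down why the resulting limit conclusion is valid. Write the \mt-tensor as $\ten{A} = s\ten{I} - \ten{B}$ with $\ten{B} \geq 0$ and $s \geq \rho(\ten{B})$, and for each $\varepsilon > 0$ consider the perturbation $\ten{A}_\varepsilon := \ten{A} + \varepsilon \ten{I} = (s+\varepsilon)\ten{I} - \ten{B}$. Since $s+\varepsilon > \rho(\ten{B})$, the tensor $\ten{A}_\varepsilon$ is by definition a nonsingular \mt-tensor.

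Next, invoke Theorem \ref{thm1} to conclude that $\ten{A}_\varepsilon$ is semi-positive. Proposition \ref{prop2} then guarantees that every diagonal entry of $\ten{A}_\varepsilon$ is strictly positive. Unpacking, the $i$-th diagonal entry of $\ten{A}_\varepsilon$ is exactly $a_{ii\cdots i} + \varepsilon$, so we obtain
\[
a_{ii\cdots i} + \varepsilon > 0 \quad \text{for every } \varepsilon > 0 \text{ and every } i = 1,2,\ldots,n.
\]

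Finally, let $\varepsilon \to 0^+$. This yields $a_{ii\cdots i} \geq 0$ for all $i$, which is precisely the claim. There is no real obstacle here: the statement is essentially a corollary of Proposition \ref{prop2} combined with Theorem \ref{thm1}, once one notices that adding $\varepsilon\ten{I}$ keeps the tensor a \zt-tensor and strictly increases $s$ past $\rho(\ten{B})$. The only thing to be careful about is that strict positivity is not preserved under limits, so the conclusion must be weakened to nonnegativity, consistent with the statement of the proposition.
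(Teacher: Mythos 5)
Your proof is correct and follows exactly the perturbation argument the paper itself uses: replace $\ten{A}=s\ten{I}-\ten{B}$ by $\ten{A}_\varepsilon=(s+\varepsilon)\ten{I}-\ten{B}$, apply Theorem \ref{thm1} together with Proposition \ref{prop2} to get positive diagonal entries of $\ten{A}_\varepsilon$, and let $\varepsilon\to 0^+$. Your added remark that strictness is lost in the limit is precisely the point the paper glosses over, so the write-up is a clean formalization of the same idea.
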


The conception semi-positivity \cite{Berman94} can be extended as
follows.
\begin{definition}[Semi-nonnegative tensor]
We call a tensor $\ten{A}$ as a \emph{semi-nonnegative tensor}, if there exists $x > 0$ such that $\ten{A} x^{m-1} \geq 0$.
\end{definition}

Not like the semi-positive case, a tensor is a semi-nonnegative
\zt-tensor is not equivalent to that it is a general \mt-tensor.
Actually, a semi-nonnegative \zt-tensor must be an \mt-tensor, but
the converse is not true. The proof of the first statement is
analogous to Lemma \ref{lemma4}, so we have the next theorem.
\begin{theorem}
A semi-nonnegative \zt-tensor is also an \mt-tensor.
\end{theorem}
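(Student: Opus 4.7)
The plan is to adapt the argument of Lemma \ref{lemma4} almost verbatim, relaxing the strict positivity of $\ten{A} x^{m-1}$ to non-negativity. Since $\ten{A}$ is a \zt-tensor, I would first write $\ten{A} = s\ten{I} - \ten{B}$ with $\ten{B} \geq 0$ and $s > 0$. By the hypothesis of semi-nonnegativity, there exists some $x > 0$ with $\ten{A} x^{m-1} \geq 0$, which componentwise is equivalent to
$$
\ten{B} x^{m-1} \leq s\, x^{[m-1]}.
$$

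The second step is to invoke the Collatz-type upper bound for the spectral radius of a nonnegative tensor evaluated at a positive vector,
$$
\rho(\ten{B}) \leq \max_{1 \leq i \leq n} \frac{(\ten{B} x^{m-1})_i}{x_i^{m-1}},
$$
which is exactly the bound already used in the proof of Lemma \ref{lemma4} and is a standard consequence of the Perron-Frobenius theory for nonnegative tensors recalled in Theorem \ref{P-F}. Combining this with the inequality from the first step gives
$$
\rho(\ten{B}) \leq \max_{1 \leq i \leq n} \frac{(\ten{B} x^{m-1})_i}{x_i^{m-1}} \leq s,
$$
so $s \geq \rho(\ten{B})$, which is precisely the defining condition for $\ten{A}$ to be an \mt-tensor.

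I do not anticipate any substantive obstacle: the argument is essentially a re-run of Lemma \ref{lemma4}, and the sole effect of weakening $>$ to $\geq$ is that one obtains $s \geq \rho(\ten{B})$ instead of $s > \rho(\ten{B})$, so the conclusion is an \mt-tensor rather than a nonsingular one. This is consistent with the authors' subsequent remark that the converse inclusion fails: a general (possibly singular) \mt-tensor need not admit a positive test vector $x$ with $\ten{A} x^{m-1} \geq 0$, because the nonneg Perron eigenvector of $\ten{B}$ when $s = \rho(\ten{B})$ is only guaranteed to be nonnegative, not strictly positive, in the weakly reducible case.
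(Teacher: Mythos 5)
Your proof is correct and is exactly the argument the paper intends: the authors state that the result follows ``analogously'' to the lemma asserting that a semi-positive \zt-tensor is a nonsingular \mt-tensor, and your adaptation---writing $\ten{A}=s\ten{I}-\ten{B}$ and applying the Collatz-type upper bound $\rho(\ten{B}) \le \max_{1\le i\le n}(\ten{B}x^{m-1})_i/x_i^{m-1}$ at the positive witness $x$ to conclude $s \ge \rho(\ten{B})$---is precisely that analogue. The sole change from the strict case is that $>$ weakens to $\ge$, yielding an \mt-tensor rather than a nonsingular one, just as you observe.
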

Conversely, we can give a counterexample to show that there exists an \mt-tensor which is not semi-nonnegative. Let $\ten{B}$ be a nonnegative tensor of size $2 \times 2 \times 2 \times 2$, and the entries are as follows:
$$
b_{1111} = 2,\ b_{1122} = b_{2222} = 1,\ \text{and } b_{i_1 i_2 i_3 i_4} = 0 \text{ otherwise}.
$$
Then the spectral radius of $\ten{B}$ is apparently $2$. Let $\ten{A} = 2 \ten{I} - \ten{B}$, then $\ten{A}$ is an \mt-tensor with entries
$$
a_{1122} = -1,\ a_{2222} = 1,\ \text{and } a_{i_1 i_2 i_3 i_4} = 0 \text{ otherwise}.
$$
Thus for every positive vector $x$, the first component of $\ten{A}x^3$ is always negative and the second one is positive, which is to say that there is no such a positive vector $x$ with $\ten{A}x^3 \geq 0$.
Therefore $\ten{A}$ is an \mt-tensor but is not semi-nonnegative. However there are still some special \mt-tensors are semi-nonnegative.
\begin{proposition}
The following tensors are semi-nonnegative:
\begin{enumerate}[\rm (1)]
  \item A diagonally dominant \zt-tensor with nonnegative diagonal entries is semi-nonnegative;
  \item A weakly irreducible \mt-tensor is semi-nonnegative;
  \item Let $\ten{A} = s\ten{I} - \ten{B}$ be a weakly reducible \mt-tensor, where $\ten{B} \geq 0$ and $s = \rho(\ten{B})$, and $I_1,I_2,\cdots,I_k$ be the same as in Lemma \ref{lemma5}. If
      $$
      \rho(\ten{B}[I_t])
      \left\{
      \begin{array}{ll}
       < s, & t = 1,2,\cdots,k_1, \\
       = s, & t = k_1+1,k_1+2,\cdots,k
      \end{array}
      \right.
      $$
      and the entries of $\ten{B}[I_{k_1+1} \sqcup  I_{k_1+2} \sqcup  \cdots \sqcup I_k]$ are all zeros except the ones in the leading subtensors $\ten{B}[I_{k_1+1}], \ten{B}[I_{k_1+2}],\cdots,\ten{B}[I_k]$,
      then $\ten{A}$ is semi-nonnegative;
  \item A symmetric \mt-tensor is semi-nonnegative.
\end{enumerate}
\end{proposition}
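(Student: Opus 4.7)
My plan is to dispatch the four claims in order, using Perron--Frobenius (Theorem \ref{P-F}) together with the block decomposition of Lemma \ref{lemma5} as the main tools.

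Parts (1) and (2) should be short. For (1), set $x = \mathbf{e}$: diagonal dominance together with the sign pattern of a \zt-tensor with nonnegative diagonal gives
\[
(\ten{A}\mathbf{e}^{m-1})_i \;=\; a_{ii\cdots i} + \sum_{(i_2,\ldots,i_m)\neq (i,\ldots,i)} a_{ii_2\cdots i_m} \;\geq\; 0,
\]
mirroring the strictly diagonally dominant case but with non-strict inequality. For (2), write $\ten{A} = s\ten{I} - \ten{B}$ with $\ten{B}\geq 0$ and $s\geq \rho(\ten{B})$; weak irreducibility passes from $\ten{A}$ to $\ten{B}$ exactly as in the proof of Proposition \ref{prop3}, so Theorem \ref{P-F} yields $x > 0$ with $\ten{B} x^{m-1} = \rho(\ten{B}) x^{[m-1]}$, whence $\ten{A} x^{m-1} = (s-\rho(\ten{B})) x^{[m-1]} \geq 0$.

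For part (3), I would splice part (2) into the inductive scaling argument of Lemma \ref{lemma5}. Put $J := I_1 \sqcup \cdots \sqcup I_{k_1}$ and $K := I_{k_1+1} \sqcup \cdots \sqcup I_k$. Since every leading block $\ten{B}[I_t]$ with $t \leq k_1$ has spectral radius $<s$, the subtensor $\ten{A}[J]$ is a nonsingular \mt-tensor, so Lemma \ref{lemma5} supplies $y > 0$ with $\ten{A}[J] y^{m-1} > 0$. For each $t > k_1$, part (2) applied to $\ten{A}[I_t]$ yields $z_t > 0$ with $\ten{A}[I_t] z_t^{m-1} = 0$. By the decoupling hypothesis on $\ten{B}[K]$ together with Lemma \ref{lemma5}, any row of $\ten{A}$ indexed by $K$ only sees within-block entries, so setting
\[
x \;:=\; \bigl[\, y\,;\; \lambda z_{k_1+1}\,;\; \ldots\,;\; \lambda z_k\,\bigr]
\]
makes those rows return $0$ for every $\lambda > 0$. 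For a row $i \in J$, expand
\[
(\ten{A}x^{m-1})_i \;=\; (\ten{A}[J] y^{m-1})_i \;-\; R_i(\lambda),
\]
where $R_i(\lambda)\geq 0$ collects the cross-block contributions from multi-indices with at least one entry in $K$. Each such term carries at least one factor of $\lambda$, hence $R_i(\lambda) \to 0$ as $\lambda \to 0^+$, and for sufficiently small $\lambda > 0$ the strict positivity of $(\ten{A}[J] y^{m-1})_i$ survives; since $x > 0$ by construction, $\ten{A}$ is semi-nonnegative.

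For part (4), I would first argue that symmetry collapses the Lemma \ref{lemma5} partition into a genuine block-diagonal decomposition: if some nonzero $b_{i_1\cdots i_m}$ had indices in two distinct blocks $I_{t_1}$ and $I_{t_2}$ with $t_1<t_2$, symmetry would let us permute an $I_{t_2}$-index into the first slot while leaving an $I_{t_1}$-index in a later slot, and the Lemma \ref{lemma5} condition applied at that first slot forces all other indices into $I_{t_2} \sqcup \cdots \sqcup I_k$, a contradiction. Hence $\ten{B}$ is block-diagonal with weakly irreducible blocks $\ten{B}[I_t]$, and part (2) applied to each block gives $x_t > 0$ with $\ten{A}[I_t] x_t^{m-1} \geq 0$; the concatenation $x:=[x_1;\ldots;x_k] > 0$ finishes the claim. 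The main obstacle I foresee is the perturbation bookkeeping in part (3): one has to check that every cross-block term in $(\ten{A} x^{m-1})_i$ for $i \in J$ is controllably small as $\lambda \to 0^+$ so that the positivity inherited from $y$ is not overturned. Parts (1), (2), and (4) reduce almost immediately to Perron--Frobenius and the block structure of Lemma \ref{lemma5}.
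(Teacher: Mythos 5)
Your proposal is correct and follows exactly the route the paper says it would take: parts (1) and (2) mirror Propositions~10 and~11 (the semi-positive case), part (3) reruns the inductive scaling from the proof of Lemma~\ref{lemma5} with the eigenvector of the nonsingular part $J$ perturbed by $\lambda$ times the eigenvectors of the critical blocks, and the perturbation bookkeeping you flag is handled exactly as in that lemma. The only small difference is in (4): the paper treats it as a corollary of (3), whereas you observe directly that symmetry of $\ten{B}$ forces the Hu partition to be genuinely block-diagonal, after which (2) applied block by block already finishes; this bypasses (3) entirely and is slightly cleaner. One presentational nit in (3): to cite Lemma~\ref{lemma5} for $\ten{A}[J]$ you should note that $\rho(\ten{B}[J]) = \max_{t\le k_1}\rho(\ten{B}[I_t]) < s$ (or simply rerun the inductive scaling on $J$), since the statement of that lemma requires $\ten{A}[J]$ to already be a nonsingular \mt-tensor; this is true but worth a sentence.
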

The proofs of (1)$\sim$(3) are similar with the semi-positive case and (4) is a direct corollary of (3), therefore we omit them.

%%%%%%%%%%%%%%%%%%%%%%%%%%%%%%%%%%%%%%%%%%%%%%%%%%%%%%%%%%%%%%%%%%%%%%%%%%%%%%%%%%%%%%%%%%%%%%%%%%%%%%%%%%%%%%%%%%%%%%%%%%%%%%%%%%%%%%%%%%%%%%%%%%%%%%%%%%%%%%%%%%

\section{Monotonicity}
Following the condition (C9) in Section \ref{mmatrix}, we generalize
\emph{monotone} \cite{Berman94} from nonsingular \textit{M}-matrices
to higher order tensors.

\subsection{Definitions}

\begin{definition}[Monotone tensor]
We call a tensor $\ten{A}$ as a \emph{monotone tensor}, if $\ten{A} x^{m-1} \geq 0$ implies $x \geq 0$.
\end{definition}

It is easy to show that the set of all monotone tensors is not empty, since the even order diagonal tensors with all positive diagonal entries belong to this set. However an odd order tensor is never a monotone tensor.
Since when $m$ is odd, $\ten{A} x^{m-1} \geq 0$ implies $\ten{A} (-x)^{m-1} \geq 0$ as well, thus we cannot guarantee that $x$ is nonnegative. Therefore we refer to even order tensors only in this section.

Sometimes we will use another equivalent definition of monotone tensor for convenience.

\begin{lemma}\label{lemma6}
An even order tensor $\ten{A}$ is a monotone tensor if and only if $\ten{A}x^{m-1} \leq 0$ implies $x \leq 0$.
\end{lemma}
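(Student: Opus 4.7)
The plan is to exploit the parity of the exponent $m-1$. Since $m$ is even, $m-1$ is odd, so for any vector $x \in \mathbb{R}^n$ the tensor-vector product satisfies the homogeneity identity $\ten{A}(-x)^{m-1} = (-1)^{m-1}\ten{A}x^{m-1} = -\ten{A}x^{m-1}$. This sign-reversal identity is the only substantive ingredient; both implications then reduce to the change of variable $x \mapsto -x$.

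For the forward direction, I would assume $\ten{A}$ is monotone in the sense of the definition and take an arbitrary $x$ with $\ten{A}x^{m-1} \leq 0$. Setting $y = -x$ gives $\ten{A}y^{m-1} = -\ten{A}x^{m-1} \geq 0$, whence the monotonicity hypothesis yields $y \geq 0$, i.e., $x \leq 0$. For the reverse direction, I would assume the condition $\ten{A}x^{m-1} \leq 0 \Rightarrow x \leq 0$ and, given any $y$ with $\ten{A}y^{m-1} \geq 0$, apply the same trick to $-y$: $\ten{A}(-y)^{m-1} = -\ten{A}y^{m-1} \leq 0$, so $-y \leq 0$, i.e., $y \geq 0$, which is monotonicity.

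There is no real obstacle here; the only thing to be careful about is to state clearly at the outset why $(-x)^{m-1} = -x^{m-1}$ under the tensor action, which is exactly the point at which evenness of $m$ is used, and which also explains the parenthetical remark immediately preceding the lemma about odd-order tensors never being monotone. I would therefore open the proof with one sentence isolating the identity $\ten{A}(-x)^{m-1} = -\ten{A}x^{m-1}$ and then present the two implications in parallel in a few lines.
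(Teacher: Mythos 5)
Your proposal is correct and follows exactly the paper's argument: both directions hinge on the identity $\ten{A}(-x)^{m-1} = -\ten{A}x^{m-1}$, valid because $m-1$ is odd, followed by the substitution $x \mapsto -x$ and an application of the relevant hypothesis.
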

\begin{proof}
Suppose that  $\ten{A}$ is a monotone tensor. Since $\ten{A}x^{m-1} \leq 0$ and $m-1$ is odd, we have
$$
\ten{A}(-x)^{m-1} = - \ten{A}x^{m-1} \geq 0.
$$
Then $-x \geq 0$ by the definition, which is equivalent to $x \leq 0$.

If $\ten{A}x^{m-1} \leq 0$ implies $x \leq 0$. When $\ten{A}y^{m-1} \geq 0$, we have
$$
\ten{A}(-y)^{m-1} = - \ten{A}y^{m-1} \leq 0.
$$
Therefore $-y \leq 0$, which is equivalent to $y \geq 0$. Thus $\ten{A}$ is a monotone tensor.
\end{proof}

\subsection{Properties}

We shall prove that a monotone \zt-tensor is also a nonsingular \mt-tensor. Before that, we need some lemmas.

\begin{lemma}\label{lemma7}
An even order monotone tensor has no zero H-eigenvalue.
\end{lemma}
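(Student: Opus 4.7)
The plan is to derive a contradiction from the assumption that $0$ is an H-eigenvalue, by exploiting both the original definition of monotonicity and its equivalent form from Lemma \ref{lemma6}.

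First I would unpack what a zero H-eigenvalue means: if $0$ is an H-eigenvalue of $\ten{A}$, then by definition there exists a nonzero real vector $x \in \mathbb{R}^n$ with $\ten{A}x^{m-1} = 0 \cdot x^{[m-1]} = 0$. The key observation is that the zero vector is simultaneously $\geq 0$ and $\leq 0$, so this single equation feeds into both characterizations of monotonicity.

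Next I would apply the two characterizations in parallel. Since $\ten{A}x^{m-1} = 0 \geq 0$, the definition of a monotone tensor gives $x \geq 0$. Since also $\ten{A}x^{m-1} = 0 \leq 0$, Lemma \ref{lemma6} (which is available because $m$ is even, as monotone tensors are only defined in the even-order case) gives $x \leq 0$. Combining these two inequalities componentwise forces $x = 0$, contradicting the requirement that an H-eigenvector be nonzero.

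There is essentially no obstacle here: the argument is a two-line consequence of having both the ``$\geq$'' and ``$\leq$'' versions of monotonicity at one's disposal, which is exactly what Lemma \ref{lemma6} was set up to provide. The only thing worth being explicit about is that the equivalence in Lemma \ref{lemma6} requires even order, so the hypothesis ``even order'' in the statement is used through that lemma rather than directly.
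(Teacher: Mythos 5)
Your proof is correct and uses essentially the same idea as the paper: the paper observes that $\ten{A}(\alpha x)^{m-1}=0$ for all real $\alpha$ and hence both $x\geq 0$ and $-x\geq 0$ must hold, while you reach the same pair of inequalities by applying the original definition to $\ten{A}x^{m-1}=0\geq 0$ and Lemma \ref{lemma6} to $\ten{A}x^{m-1}=0\leq 0$; since Lemma \ref{lemma6} itself is proved by exactly this sign flip, the two arguments are interchangeable. Your version is slightly more explicit about where even order is used, and avoids the paper's somewhat vague phrasing ``we cannot ensure that $\alpha x\geq 0$.''
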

\begin{proof}
Let $\ten{A}$ be an even order monotone tensor. If $\ten{A}$ has a zero H-eigenvalue, that is, there is a nonzero vector $x \in \mathbb{R}^n$ such that $\ten{A}x^{m-1} = 0$, then $\ten{A}(\alpha x)^{m-1} = 0$
as well for all $\alpha \in \mathbb{R}$. Thus we cannot ensure that $\alpha x \geq 0$, which contradicts the definition of a monotone tensor. Therefore $\ten{A}$ has no zero H-eigenvalue.
\end{proof}

\begin{lemma}\label{lemma8}
Every H$^+$-eigenvalue of an even order monotone tensor is nonnegative.
\end{lemma}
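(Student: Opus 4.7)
The plan is to proceed by contradiction, exploiting Lemma \ref{lemma6} (the dual characterization of monotonicity) to reach a contradiction with the nonnegativity of the eigenvector.

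Let $\ten{A}$ be an even order monotone tensor and let $\lambda$ be an H$^+$-eigenvalue, so by definition there exists a nonzero $x \in \mathbb{R}_+^n$ with $\ten{A}x^{m-1} = \lambda x^{[m-1]}$. I want to show $\lambda \geq 0$, so I suppose instead that $\lambda < 0$. Since $m$ is even, $m-1$ is odd, but this does not matter here because $x \geq 0$ already gives $x^{[m-1]} \geq 0$ entrywise. Multiplying by the negative scalar $\lambda$ yields $\lambda x^{[m-1]} \leq 0$, hence $\ten{A}x^{m-1} \leq 0$.

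Now I invoke Lemma \ref{lemma6}: for an even order monotone tensor, $\ten{A}x^{m-1} \leq 0$ forces $x \leq 0$. Combined with the standing hypothesis $x \geq 0$, this gives $x = 0$, contradicting the fact that an eigenvector must be nonzero. Therefore $\lambda \geq 0$, as desired.

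There is no real obstacle here; the argument is essentially a two-line contradiction once Lemma \ref{lemma6} is in hand. The only subtlety to flag is that the proof does not require Lemma \ref{lemma7}, although Lemma \ref{lemma7} would actually strengthen the conclusion to $\lambda > 0$ (since $\lambda = 0$ would make $x$ a witness to a zero H-eigenvalue, which an even order monotone tensor cannot have). I will stick to proving the statement as written, namely $\lambda \geq 0$.
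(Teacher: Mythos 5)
Your proof is correct and is essentially the paper's argument: both derive a contradiction from $\lambda<0$ by observing that $\lambda x^{[m-1]}\leq 0$ together with monotonicity forces $x\leq 0$, hence $x=0$. The only cosmetic difference is that you invoke Lemma~\ref{lemma6} to go from $\ten{A}x^{m-1}\leq 0$ to $x\leq 0$, whereas the paper applies the original definition of monotonicity to $-x$; since Lemma~\ref{lemma6} is obtained by exactly that sign-flip, the two are the same argument packaged differently.
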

\begin{proof}
Let $\ten{A}$ be an even order monotone tensor and $\lambda$ be an H$^+$-eigenvalue of $\ten{A}$, that is, there is a nonzero vector $x \in \mathbb{R}_+^n$ such that $\ten{A}x^{m-1} = \lambda x^{[m-1]}$.
Then we have $\ten{A}(-x)^{m-1} = -\lambda x^{[m-1]}$, since $m-1$ is odd. If $\lambda < 0$, then $\ten{A}(-x)^{m-1} = -\lambda x^{[m-1]} \geq 0$, which indicates $-x \geq 0$ as well as $x \leq 0$.
It contradicts that $x$ is nonzero and nonnegative. Then $\lambda \geq 0$.
\end{proof}

The next theorem follows directly from Lemma \ref{lemma7} and Lemma \ref{lemma8}.
\begin{theorem}\label{thm9}
Every H$^+$-eigenvalue of an even order monotone tensor is positive.
\end{theorem}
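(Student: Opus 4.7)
The plan is to observe that Theorem \ref{thm9} is essentially a direct corollary of the two preceding lemmas, so the argument is a one-line combination rather than anything requiring new machinery. First I would recall that any H$^+$-eigenvalue is by definition an H-eigenvalue (the associated eigenvector lies in $\mathbb{R}_+^n \subset \mathbb{R}^n$ and is nonzero). Hence every H$^+$-eigenvalue $\lambda$ of an even order monotone tensor $\ten{A}$ is in particular an H-eigenvalue of $\ten{A}$.

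Next, I would invoke Lemma \ref{lemma8} to conclude that $\lambda \geq 0$, since it gives nonnegativity for all H$^+$-eigenvalues of an even order monotone tensor. It then remains to rule out $\lambda = 0$. For this I would use Lemma \ref{lemma7}, which asserts that an even order monotone tensor admits no zero H-eigenvalue at all; since $\lambda$ is an H-eigenvalue, $\lambda \neq 0$. Combining $\lambda \geq 0$ with $\lambda \neq 0$ yields $\lambda > 0$, which is the desired conclusion.

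There is no genuine obstacle here: the content has already been established in Lemmas \ref{lemma7} and \ref{lemma8}, and the theorem is the formal packaging of their union. The only thing worth making explicit in the write-up is the (trivial) point that an H$^+$-eigenvalue qualifies as both an ``H-eigenvalue'' for the application of Lemma \ref{lemma7} and as an ``H$^+$-eigenvalue'' for the application of Lemma \ref{lemma8}, after which the proof reduces to the logical observation that a nonnegative real number that is not zero must be positive.
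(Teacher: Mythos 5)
Your proof is correct and is exactly the argument the paper intends: the paper states only that the theorem ``follows directly from Lemma \ref{lemma7} and Lemma \ref{lemma8},'' and you have simply spelled out the one-line combination (nonnegativity from Lemma \ref{lemma8}, nonvanishing from Lemma \ref{lemma7} since an H$^+$-eigenvalue is in particular an H-eigenvalue). No gap, no divergence from the paper's route.
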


By applying this result, we can now reveal the relationship of the
set of even order monotone \zt-tensors and that of nonsingular
\mt-tensors.

\begin{theorem}
An even order monotone \zt-tensor is also a nonsingular \mt-tensor.
\end{theorem}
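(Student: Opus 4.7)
The plan is to leverage Theorem \ref{thm9} together with the Perron--Frobenius theorem for nonnegative tensors. Write the \zt-tensor in the standard form $\ten{A} = s\ten{I} - \ten{B}$ with $\ten{B} \geq 0$ and $s > 0$. To prove that $\ten{A}$ is a nonsingular \mt-tensor, it suffices to verify the strict inequality $s > \rho(\ten{B})$.

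First, I would invoke the Perron--Frobenius theorem (Theorem \ref{P-F}) to extract a nonzero nonnegative vector $x \in \mathbb{R}_+^n$ satisfying $\ten{B} x^{m-1} = \rho(\ten{B}) \, x^{[m-1]}$. The key observation is that such an $x$ is simultaneously an H$^+$-eigenvector of $\ten{A}$: a direct computation gives
$$
\ten{A} x^{m-1} = s \, x^{[m-1]} - \ten{B} x^{m-1} = \bigl(s - \rho(\ten{B})\bigr) x^{[m-1]},
$$
so $s - \rho(\ten{B})$ is an H$^+$-eigenvalue of $\ten{A}$.

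Now I apply Theorem \ref{thm9}, which guarantees that every H$^+$-eigenvalue of an even order monotone tensor is strictly positive. Since $\ten{A}$ is by hypothesis an even order monotone \zt-tensor, it follows at once that $s - \rho(\ten{B}) > 0$, which is exactly the defining condition for $\ten{A}$ to be a nonsingular \mt-tensor.

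There is no substantive obstacle here: the real work has already been carried out in Lemmas \ref{lemma7}--\ref{lemma8} and packaged into Theorem \ref{thm9}. The only thing worth double-checking is that the Perron eigenvector supplied by Theorem \ref{P-F} is nonzero (so that $s - \rho(\ten{B})$ genuinely qualifies as an H$^+$-eigenvalue and not a vacuous one), which is built into the Perron--Frobenius statement. Thus the proof reduces to a short chain of implications: monotonicity $\Rightarrow$ positivity of every H$^+$-eigenvalue $\Rightarrow$ $s > \rho(\ten{B})$ via the Perron eigenvector of $\ten{B}$ $\Rightarrow$ nonsingular \mt-tensor.
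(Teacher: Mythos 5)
Your argument is essentially identical to the paper's: both write $\ten{A} = s\ten{I} - \ten{B}$, take the Perron eigenvector $x \geq 0$ of $\ten{B}$ from Theorem \ref{P-F}, observe that it makes $s - \rho(\ten{B})$ an H$^+$-eigenvalue of $\ten{A}$, and then invoke Theorem \ref{thm9} to conclude $s > \rho(\ten{B})$. Correct and the same route.
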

\begin{proof}
Let $\ten{A}$ be an even order monotone \zt-tensor. We write $\ten{A} = s \ten{I} - \ten{B}$ since $\ten{A}$ is a \zt-tensor, where $\ten{B} \geq 0$. Then $\rho(\ten{B})$ is an H$^+$-eigenvalue of $\ten{B}$ by
Perron-Frobenius theorem for nonnegative tensors; that is, there is a nonzero vector $x \geq 0$ with $\ten{B}x^{m-1} = \rho(\ten{B}) x^{[m-1]}$. Then
$$
\ten{A}x^{m-1} = (s \ten{I} - \ten{B})x^{m-1} = (s-\rho(\ten{B})) x^{[m-1]},
$$
which is to say that $s - \rho(\ten{B})$ is an H$^+$-eigenvalue of $\ten{A}$. From Theorem \ref{thm9}, the H$^+$-eigenvalue $s - \rho(\ten{B}) > 0$, which indicates $s > \rho(\ten{B})$. So $\ten{A}$
is also a nonsingular \mt-tensor.
\end{proof}

This theorem tells us that
$$
\{\text{Monotone \zt-tensors of even order}\} \subseteq \{\text{Nonsingular \mt-tensors of even order}\}.
$$
However we will show that not every nonsingular \mt-tensor is monotone in the following subsection. The equivalent relation in matric situations between these two conditions is no longer true when the order is larger than $2$.

Next, we will present some properties of monotone \zt-tensors.

\begin{proposition}
An even order monotone \zt-tensor has all positive diagonal entries.
\end{proposition}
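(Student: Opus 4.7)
The plan is to chain together several results already established in the paper. By the immediately preceding theorem, an even-order monotone \zt-tensor $\ten{A}$ is a nonsingular \mt-tensor. By Theorem \ref{thm1}, every nonsingular \mt-tensor is a semi-positive \zt-tensor. Finally, Proposition \ref{prop2} asserts that every semi-positive \zt-tensor has all positive diagonal entries. Concatenating these implications delivers the conclusion in one line, so the main proof can simply be a citation chain.

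An alternative, fully self-contained route is to argue by contradiction directly from the definition of monotonicity. Suppose some diagonal entry satisfies $a_{ii\cdots i} \leq 0$, and take the test vector $x = -e_i$, where $e_i$ is the $i$-th standard basis vector. Since only the term with $i_2 = i_3 = \cdots = i_m = i$ contributes, one computes
\[
(\ten{A} x^{m-1})_j = (-1)^{m-1} a_{j i i \cdots i} = -a_{j i i \cdots i},
\]
using that $m$ is even so $m-1$ is odd. For $j \neq i$ we have $a_{j i i \cdots i} \leq 0$ since $\ten{A}$ is a \zt-tensor, so $(\ten{A} x^{m-1})_j \geq 0$; for $j = i$ we have $(\ten{A} x^{m-1})_i = -a_{ii\cdots i} \geq 0$ by assumption. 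Thus $\ten{A} x^{m-1} \geq 0$ while $x = -e_i \not\geq 0$, violating monotonicity.

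I would present the short citation-chain proof as the main argument, since it reuses infrastructure the paper has already invested in, and perhaps mention the direct $x = -e_i$ construction as a remark if space allows. There is no real obstacle here: the only minor thing to be careful about is the parity argument $(-1)^{m-1} = -1$, which is precisely where the even-order hypothesis enters (and which is consistent with the earlier observation in the section that odd-order tensors are never monotone, so the statement would be vacuous in that case anyway).
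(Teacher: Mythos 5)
Your second, self-contained argument with $x=-e_i$ is essentially the paper's own proof: the paper takes $x=e_i$, observes $\ten{A}e_i^{m-1}\leq 0$, and invokes Lemma~\ref{lemma6} to conclude $e_i\leq 0$, a contradiction; your version simply absorbs the sign flip of Lemma~\ref{lemma6} into the choice of test vector, which is the same computation. The parity check $(-1)^{m-1}=-1$ is exactly where evenness enters in both write-ups.

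Your preferred citation chain (monotone \zt-tensor $\Rightarrow$ nonsingular \mt-tensor $\Rightarrow$ semi-positive \zt-tensor $\Rightarrow$ positive diagonal, via the preceding theorem, Theorem~\ref{thm1}, and Proposition~\ref{prop2}) is logically valid and genuinely different from what the paper does. It is shorter on the page, but it leans on Theorem~\ref{thm1}, whose hard direction (Lemma~\ref{lemma5}) requires the full weakly-irreducible partition machinery for nonnegative tensors. The paper instead opts for a two-line elementary argument that uses nothing but the definition of monotonicity and a standard basis vector. Both are correct; the paper's choice keeps the monotonicity section self-contained and proportionate to the triviality of the claim, while yours emphasizes the structural relationships the paper has already established. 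Either would be acceptable; if you use the chain, it is worth noting explicitly that the proposition then becomes an immediate corollary rather than an independent fact.
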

\begin{proof}
Let $\ten{A}$ be an even order monotone \zt-tensor. Consider $\ten{A} e_i^{m-1}$ ($i = 1,2,\cdots,n$), where $e_i$ denotes the vector with only one nonzero entry $1$ in the $i$-th position. We have
$$
\begin{array}{ll}
(\ten{A} e_i^{m-1})_i = a_{i i \cdots i}, & i = 1,2,\cdots,n \\
(\ten{A} e_i^{m-1})_j = a_{j i \cdots i} \leq 0, &  j \neq i.
\end{array}
$$
If $a_{i i \cdots i} \leq 0$ then $\ten{A} e_i^{m-1} \leq 0$, which indicates $e_i \leq 0$ by Lemma \ref{lemma6}, but it is impossible. So we have $a_{i i \cdots i} > 0$ for $i = 1,2,\cdots,n$.
\end{proof}

The next proposition shows some rows of a monotone \zt-tensor is
strictly diagonally dominant.

\begin{proposition}
Let $\ten{A}$ be an even order monotone \zt-tensor. Then
$$
a_{i i \cdots i} > \displaystyle\sum_{(i_2,i_3,\cdots,i_m) \neq
(i,i,\cdots,i)} |a_{i i_2 \cdots i_m}|
$$
for some $i \in \{1,2,\cdots,n\}$.
\end{proposition}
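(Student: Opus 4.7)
The plan is to reduce the strict diagonal dominance at some row to a single positivity statement about $\ten{A}\mathbf{e}^{m-1}$, where $\mathbf{e}$ is the all ones vector, and then invoke the contrapositive form of monotonicity (Lemma \ref{lemma6}) to extract that positivity.

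First I would rewrite the desired inequality in a more convenient form. Since $\ten{A}$ is a \zt-tensor, every off-diagonal entry satisfies $a_{i i_2 \cdots i_m} \le 0$, so $|a_{i i_2 \cdots i_m}| = -a_{i i_2 \cdots i_m}$ for $(i_2,\ldots,i_m) \ne (i,\ldots,i)$. Consequently the claim
$$
a_{i i \cdots i} > \sum_{(i_2,\ldots,i_m) \ne (i,\ldots,i)} |a_{i i_2 \cdots i_m}|
$$
is equivalent to $\sum_{i_2,\ldots,i_m} a_{i i_2 \cdots i_m} > 0$, which is precisely $(\ten{A}\mathbf{e}^{m-1})_i > 0$. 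So it suffices to show that $\ten{A}\mathbf{e}^{m-1}$ has at least one strictly positive component.

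Next I would argue by contradiction. Assume $(\ten{A}\mathbf{e}^{m-1})_i \le 0$ for every $i \in \{1,2,\ldots,n\}$, i.e., $\ten{A}\mathbf{e}^{m-1} \le 0$. Because $\ten{A}$ is an even order monotone tensor, Lemma \ref{lemma6} yields $\mathbf{e} \le 0$, which is absurd. Hence some component of $\ten{A}\mathbf{e}^{m-1}$ must be strictly positive, finishing the proof.

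There is essentially no obstacle here: the whole argument is a one line rewrite followed by the contrapositive characterization of monotonicity that is already available from Lemma \ref{lemma6}. The only thing to be careful about is sign bookkeeping when converting the absolute values using the \zt-tensor sign pattern, and to state that we are using the even-order version of monotonicity (so that Lemma \ref{lemma6} actually applies).
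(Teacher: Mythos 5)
Your proof is correct and follows essentially the same route as the paper: both rewrite the target inequality via the \zt-tensor sign pattern as $(\ten{A}\mathbf{e}^{m-1})_i > 0$, assume for contradiction that $\ten{A}\mathbf{e}^{m-1} \le 0$, and invoke Lemma \ref{lemma6} to derive the absurdity $\mathbf{e} \le 0$.
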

\begin{proof}
Consider $\ten{A} {\bf e}^{m-1}$ ($i = 1,2,\cdots,n$), where ${\bf
e}$ denotes the all ones vector. We have
$$
(\ten{A} {\bf e}^{m-1})_i = a_{i i \cdots i} +
\sum_{(i_2,i_3,\cdots,i_m) \neq (i,i,\cdots,i)} a_{i i_2 \cdots i_m}
= a_{i i \cdots i} - \sum_{(i_2,i_3,\cdots,i_m) \neq (i,i,\cdots,i)}
|a_{i i_2 \cdots i_m}|,
$$
since $a_{i i_2 \cdots i_m} \leq 0$ for $(i_2,i_3,\cdots,i_m) \neq
(i,i,\cdots,i)$.

If $a_{i i \cdots i} \leq \sum\limits_{(i_2,i_3,\cdots,i_m) \neq
(i,i,\cdots,i)} |a_{i i_2 \cdots i_m}|$ for all $i = 1,2,\cdots,n$,
then $\ten{A} {\bf e}^{m-1} \leq 0$, which indicates ${\bf e} \leq
0$ by Lemma \ref{lemma6}, and it is impossible. So $a_{i i \cdots i}
> \sum\limits_{(i_2,i_3,\cdots,i_m) \neq (i,i,\cdots,i)} |a_{i i_2
\cdots i_m}|$ for some $i$.
\end{proof}

\begin{proposition}
Let $\ten{A}$ be an even order monotone \zt-tensor. Then
$\ten{A}+\ten{D}$ has all positive H$^+$-eigenvalues for each
nonnegative diagonal tensor $\ten{D}$.
\end{proposition}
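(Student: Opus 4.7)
The plan is to adapt the argument used for Lemma \ref{lemma8} (nonnegativity of H$^+$-eigenvalues of a monotone tensor) so as to absorb the diagonal perturbation $\ten{D}$, then apply the $\leq 0$ reformulation of monotonicity (Lemma \ref{lemma6}) to close out by contradiction. Concretely, I would start by picking an H$^+$-eigenvalue $\lambda$ of $\ten{A}+\ten{D}$, with a nonzero nonnegative eigenvector $x \in \mathbb{R}^n_+$ satisfying $(\ten{A}+\ten{D})x^{m-1} = \lambda x^{[m-1]}$.

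The single algebraic step needed is to isolate $\ten{A}x^{m-1}$. Since $\ten{D}$ is diagonal, $(\ten{D}x^{m-1})_i = d_{ii\cdots i}\,x_i^{m-1}$, so moving this contribution across yields
$$
(\ten{A}x^{m-1})_i = (\lambda - d_{ii\cdots i})\,x_i^{m-1}, \qquad i = 1, 2, \ldots, n.
$$
This is the only piece of computation; everything afterwards is a sign analysis.

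Next I would argue by contradiction, assuming $\lambda \leq 0$. Because $\ten{D} \geq 0$, each coefficient $\lambda - d_{ii\cdots i}$ is nonpositive, while each $x_i^{m-1}$ is nonnegative (from $x \geq 0$). The display above then gives $\ten{A}x^{m-1} \leq 0$ coordinate-wise. By Lemma \ref{lemma6} the monotonicity of $\ten{A}$ forces $x \leq 0$, and combined with $x \geq 0$ this yields $x = 0$, contradicting the fact that $x$ is a nonzero eigenvector. Hence $\lambda > 0$.

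I do not expect a serious obstacle; the statement is essentially a one-step strengthening of Lemma \ref{lemma8}, with the perturbation by $\ten{D}$ simply shifting the effective ``eigenvalue'' in the $i$-th coordinate from $\lambda$ to $\lambda - d_{ii\cdots i}$. The only point requiring mild care is to resist the temptation to read a single scalar eigenvalue off $\ten{A}x^{m-1}$ (which one cannot, since the shifts differ coordinate-by-coordinate); the correct move is instead to pass directly to the componentwise inequality $\ten{A}x^{m-1} \leq 0$ and invoke Lemma \ref{lemma6}.
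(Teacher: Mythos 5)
Your proposal is correct and follows the same route as the paper: both assume a nonpositive H$^+$-eigenvalue $\lambda$ of $\ten{A}+\ten{D}$, observe that $\ten{A}x^{m-1} = \lambda x^{[m-1]} - \ten{D}x^{m-1} \leq 0$ since $x \geq 0$, $\ten{D} \geq 0$, and $\lambda \leq 0$, then invoke the $\leq 0$ formulation of monotonicity (Lemma \ref{lemma6}) to force $x \leq 0$, contradicting that $x$ is nonzero and nonnegative. The only difference is that you spell out the componentwise identity $(\ten{A}x^{m-1})_i = (\lambda - d_{ii\cdots i})x_i^{m-1}$, which the paper leaves implicit.
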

\begin{proof}
If $\ten{A}+\ten{D}$ has a non-positive H$^+$-eigenvalue, that is,
there exists a nonzero vector $x \geq 0$ such that
$(\ten{A}+\ten{D})x^{m-1} = \lambda x^{[m-1]}$ and $\lambda \leq 0$,
then $\ten{A}x^{m-1} = \lambda x^{[m-1]} - \ten{D}x^{m-1} \leq 0$,
since $x$ and $\ten{D}$ are nonnegative and $\lambda$ is
non-positive, which implies $x \leq 0$ from the definition of
monotone \zt-tensors. This is a contradiction. Therefore
$\ten{A}+\ten{D}$ has no non-positive H$^+$-eigenvalues for all
nonnegative diagonal tensor $\ten{D}$.
\end{proof}

Furthermore, the monotone \zt-tensor also has the following two
equivalent definitions, following the condition (C10) and (C11).
\begin{proposition}
A \zt-tensor $\ten{A}$ is monotone if and only if there exists a positive diagonal tensor $\ten{D}$ and a monotone \zt-tensor $\ten{C}$
such that $\ten{A} = \ten{D}\ten{C}$.
\end{proposition}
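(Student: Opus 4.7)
The plan is to verify both implications by exploiting the componentwise identity $\bigl((\ten{D}\ten{C})x^{m-1}\bigr)_i = d_{ii\cdots i}\cdot(\ten{C}x^{m-1})_i$, which follows directly from the definition of the ``composite'' product given in Section 2. Because $d_{ii\cdots i}>0$ for a positive diagonal tensor $\ten{D}$, multiplication by $\ten{D}$ preserves the sign of each entry of $\ten{C}x^{m-1}$, and this is the only mechanism that both directions need.

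For the ``if'' direction, suppose $\ten{A}=\ten{D}\ten{C}$ with $\ten{D}$ positive diagonal and $\ten{C}$ a monotone \zt-tensor. First I would check that $\ten{A}$ is a \zt-tensor: its off-diagonal entries are $d_{ii\cdots i}\cdot c_{i i_2\cdots i_m}$, which are nonpositive since $d_{ii\cdots i}>0$ and $c_{i i_2\cdots i_m}\leq 0$. Next, to establish monotonicity, assume $\ten{A}x^{m-1}\geq 0$; then $d_{ii\cdots i}(\ten{C}x^{m-1})_i\geq 0$, and dividing by $d_{ii\cdots i}>0$ gives $\ten{C}x^{m-1}\geq 0$, whence $x\geq 0$ by the monotonicity of $\ten{C}$.

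For the ``only if'' direction, take $\ten{D}$ to be the diagonal tensor of $\ten{A}$, which is positive diagonal by the analogue of Proposition 16 (every even-order monotone \zt-tensor has positive diagonal entries), and set $\ten{C}=\ten{D}^{-1}\ten{A}$. Using the same componentwise identity, I would verify three things in turn: (a) $\ten{A}=\ten{D}\ten{C}$, which is immediate from $\ten{D}\ten{D}^{-1}=\ten{I}$; (b) $\ten{C}$ is a \zt-tensor, since its diagonal entries equal $1$ and its off-diagonal entries are $a_{i i_2\cdots i_m}/d_{ii\cdots i}\leq 0$; (c) $\ten{C}$ is monotone, because $\ten{C}x^{m-1}\geq 0$ implies $(\ten{A}x^{m-1})_i=d_{ii\cdots i}(\ten{C}x^{m-1})_i\geq 0$, so $\ten{A}x^{m-1}\geq 0$ and then $x\geq 0$ by the monotonicity of $\ten{A}$.

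I do not expect a genuine obstacle here: the argument is essentially a bookkeeping check that the positivity of the diagonal scaling factors $d_{ii\cdots i}$ preserves both the sign pattern defining \zt-tensors and the sign implication defining monotonicity. The only mildly delicate point is remembering to cite the positivity of the diagonal of a monotone \zt-tensor so that $\ten{D}^{-1}$ in the ``only if'' direction is well-defined and positive diagonal; everything else is a direct translation between $\ten{A}x^{m-1}$ and $\ten{C}x^{m-1}$ via the componentwise relation.
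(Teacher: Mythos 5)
Your proposal is correct and follows essentially the same route as the paper: decompose $\ten{A}$ via its diagonal tensor $\ten{D}$ (with $\ten{C}=\ten{D}^{-1}\ten{A}$) for the forward direction, and in both directions use the fact that the positive diagonal scaling $\bigl((\ten{D}\ten{C})x^{m-1}\bigr)_i = d_{ii\cdots i}(\ten{C}x^{m-1})_i$ preserves the sign of each component. You are slightly more explicit than the paper in citing the positivity of the diagonal of a monotone \zt-tensor (needed for $\ten{D}^{-1}$) and in verifying the \zt-tensor property of $\ten{C}$, but the underlying argument is the same.
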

\begin{proof}
Let $\ten{D}$ be the diagonal tensor of $\ten{A}$ and $\ten{C}=\ten{D}^{-1}\ten{A}$. Clearly, $\ten{A} = \ten{D}\ten{C}$.

If $\ten{A}$ is a monotone \zt-tensor, then $\ten{D}$ is positive diagonal and $\ten{A}x^{m-1} \geq 0$ implies $x \geq 0$. When $\ten{C}x^{m-1} \geq 0$, the vector
$\ten{A}x^{m-1} = \ten{D} \Big((\ten{C} x^{m-1})^{\left[\frac{1}{m-1}\right]}\Big)^{m-1}$ is also nonnegative, thus $x \geq 0$. Since $\ten{C}$ is also a \zt-tensor, then $\ten{C}$ is a monotone \zt-tensor.

If $\ten{C}$ is a monotone \zt-tensor and $\ten{D}$ is positive diagonal, then $\ten{C}x^{m-1} \geq 0$ implies $x \geq 0$. When $\ten{A}x^{m-1} \geq 0$, the vector
 $\ten{C}x^{m-1} = \ten{D}^{-1} \Big((\ten{A} x^{m-1})^{\left[\frac{1}{m-1}\right]}\Big)^{m-1}$ is also nonnegative, thus $x \geq 0$. So $\ten{A}$ is a monotone \zt-tensor.
\end{proof}

\begin{proposition}
A \zt-tensor $\ten{A}$ is monotone if and only if there exists a
positive diagonal tensor $\ten{D}$ and a nonnegative tensor
$\ten{E}$ such that $\ten{A} = \ten{D} - \ten{E}$ and
$\ten{D}^{-1}\ten{E}$ satisfying $(\ten{D}^{-1}\ten{E})x^{m-1} \leq
x^{[m-1]}$ implies $x \geq 0$.
\end{proposition}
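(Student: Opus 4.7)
The plan is to mirror the structure of the earlier $\ten{D} - \ten{E}$ decomposition proposition for semi-positive \zt-tensors in Section 3, adapting every strict inequality there to a non-strict one. Specifically, I take $\ten{D}$ to be the diagonal tensor of $\ten{A}$ and set $\ten{E} := \ten{D} - \ten{A}$. Because $\ten{A}$ is a \zt-tensor, every off-diagonal entry of $\ten{E}$ equals $-a_{i_1 i_2 \cdots i_m} \geq 0$ while the diagonal entries of $\ten{E}$ vanish, so $\ten{E} \geq 0$. The earlier proposition of Section 4 asserting that a monotone \zt-tensor has all positive diagonal entries ensures that $\ten{D}$ is a positive diagonal tensor, so $\ten{D}^{-1}$ is well-defined.

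The engine of the argument is the coordinate-wise identity
$$
[(\ten{D}^{-1}\ten{E})x^{m-1}]_i \;=\; d_{i i \cdots i}^{-1} (\ten{E}x^{m-1})_i,
$$
which follows directly from the definition of the composite product in the Notation subsection. Using this identity together with the positivity of $d_{ii \cdots i}$, the inequality $(\ten{D}^{-1}\ten{E})x^{m-1} \leq x^{[m-1]}$ is equivalent to $\ten{E}x^{m-1} \leq \ten{D}x^{m-1}$, and hence to $\ten{A}x^{m-1} = \ten{D}x^{m-1} - \ten{E}x^{m-1} \geq 0$. In other words, the hypothesis ``$(\ten{D}^{-1}\ten{E})x^{m-1} \leq x^{[m-1]}$ implies $x \geq 0$'' is precisely a rephrasing of the monotonicity condition for $\ten{A}$.

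For the forward direction I would present the canonical $(\ten{D}, \ten{E})$ splitting described above and invoke the equivalence to conclude. For the converse, given any qualifying decomposition $\ten{A} = \ten{D} - \ten{E}$, I would reverse the chain: if $\ten{A}x^{m-1} \geq 0$, then $\ten{D}x^{m-1} \geq \ten{E}x^{m-1}$, which componentwise rewrites as $(\ten{D}^{-1}\ten{E})x^{m-1} \leq x^{[m-1]}$, whereupon the hypothesis forces $x \geq 0$. Thus $\ten{A}$ is monotone.

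The only real pitfall is notational: one must interpret $\ten{D}^{-1}\ten{E}$ via the composite product introduced in the Notation subsection (and not as an ordinary tensor contraction), and keep the $[m-1]$ powers aligned on both sides of the translated inequality. Beyond that, the argument is purely algebraic, requires no spectral input or Perron--Frobenius machinery, and runs parallel to its semi-positive counterpart line for line.
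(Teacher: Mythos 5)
Your proposal is correct and follows essentially the same route as the paper's proof: take $\ten{D}$ to be the diagonal tensor of $\ten{A}$, set $\ten{E} = \ten{D} - \ten{A}$, and use the componentwise equivalence $(\ten{D}^{-1}\ten{E})x^{m-1} \leq x^{[m-1]} \Longleftrightarrow \ten{A}x^{m-1} \geq 0$ (valid since $\ten{D}$ is positive diagonal) to translate monotonicity into the stated condition and back. You are slightly more careful than the paper in noting that the converse direction holds for an arbitrary qualifying decomposition, but the underlying argument is identical.
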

\begin{proof}
Let $\ten{D}$ be the diagonal tensor of $\ten{A}$ and $\ten{E}=\ten{D}-\ten{A}$. Clearly, $\ten{A} = \ten{D}-\ten{E}$ and $\ten{D}^{-1}\ten{E} = \ten{I} - \ten{D}^{-1}\ten{A}$.

If $\ten{A}$ is a monotone \zt-tensor, then $\ten{D}$ is positive diagonal and $\ten{A}x^{m-1} \geq 0$ implies $x \geq 0$. When $(\ten{D}^{-1}\ten{E})x^{m-1} \leq x^{[m-1]}$, we have
$\ten{E}x^{m-1} \leq \ten{D}x^{m-1}$, and thus $\ten{A} x^{m-1} \geq 0$. This indicates $x \geq 0$.

If $(\ten{D}^{-1}\ten{E})x^{m-1} \leq x^{[m-1]}$ implies $x \geq 0$. When $\ten{A} x^{m-1} \geq 0$, we have $\ten{D}x^{m-1} \geq \ten{E}x^{m-1}$, and thus $(\ten{D}^{-1}\ten{E})x^{m-1} \leq x^{[m-1]}$,
which indicates $x \geq 0$. Since $\ten{A}$ is also a \zt-tensor, then $\ten{A}$ is a monotone \zt-tensor.
\end{proof}

\subsection{A Counterexample}

We will give a $4$-order counterexample in this section to show that the set of all monotone \zt-tensor is a proper subset of the set of all nonsingular \mt-tensor, when the order is larger than $2$.

The denotation of the Kronecker product \cite{Golub13} for $\ten{A} = X \otimes Y$ means $a_{i_1 i_2 i_3 i_4} = x_{i_1 i_2} \cdot y_{i_3 i_4}$. Let $\ten{J} = I_n \otimes I_n$, where $I_n$ denotes the $n \times n$
 identity matrix. It is obvious that the spectral radius $\rho(\ten{J}) = n$, since the sum of each rows of $\ten{J}$ equals $n$. Take
$$
\ten{A} = s\ten{I} - \ten{J}\ (s>n)
\text{  and  }
x = \begin{bmatrix} 1 \\ \vdots \\ 1 \\ -\delta \end{bmatrix}\ (0 < \delta < 1).
$$
Then $\ten{A}$ is a nonsingular \mt-tensor and
$$
\ten{A} x^3 = s x^{[3]} - (x^{\rm T} x) x = s\begin{bmatrix} 1 \\ \vdots \\ 1 \\ -\delta^3 \end{bmatrix} - (n - 1 + \delta^2) \begin{bmatrix} 1 \\ \vdots \\ 1 \\ -\delta \end{bmatrix}
= \begin{bmatrix} s-n+1-\delta^2 \\ \vdots \\ s-n+1-\delta^2 \\ (n - 1 +(1-s) \delta^2)\delta \end{bmatrix}.
$$
When $\delta \leq \sqrt{\frac{n-1}{s-1}}$, the vector $\ten{A} x^3$
is nonnegative while $x$ is not nonnegative. Therefore $\ten{A}$ is
not a monotone \zt-tensor, although it is a nonsingular \mt-tensor.

\subsection{An Example}

We now give an example of nontrivial monotone \zt-tensor also
applying the Kronecker product. Let $\ten{B} = (a^{[2k-1]}b^{\rm T})
\otimes (bb^{\rm T}) \cdots \otimes (bb^{\rm T})$ be a tensor of
order $2k$, where $a$ and $b$ are nonnegative vectors. It is direct
to compute that $\rho(\ten{B}) = (b^{\rm T} a)^{2k-1}$. Then
$\ten{A} = s\ten{I} - \ten{B}$ ($s>(b^{\rm T} a)^{2k-1}$) is a
nonsingular \mt-tensor. For each $x$, we get
$$
\ten{A}x^{2k-1} = s x^{[2k-1]} - a^{[2k-1]}(b^{\rm T} x)^{2k-1}.
$$
When $\ten{A}x^{2k-1} \geq 0$, we have
$$
s^{\frac{1}{2k-1}} \cdot x \geq a \cdot (b^{\rm T} x),
$$
thus
$$
s^{\frac{1}{2k-1}} \cdot (b^{\rm T} x) \geq (b^{\rm T} a) (b^{\rm T} x).
$$
Since $s>(b^{\rm T} a)^{2k-1}$, we can conclude $b^{\rm T} x \geq
0$. So $x \geq a \cdot \frac{(b^{\rm T} x)}{s} \geq 0$, which
indicates that $\ten{A}$ is a monotone \zt-tensor.

%%%%%%%%%%%%%%%%%%%%%%%%%%%%%%%%%%%%%%%%%%%%%%%%%%%%%%%%%%%%%%%%%%%%%%%%%%%%%%%%%%%%%%%%%%%%%%%%%%%%%%%%%%%%%%%%%%%%%%%%%%%%%%%%%%%%%%%%%%%%%%%%%%%%%%%%%%%%%%%%%%

\section{An Extension of \mt-Tensors}

Inspired by the conception of \textit{H}-matrix  \cite{Berman94}, we
can extend \mt-tensors to \htt-tensors. First, we define the
comparison tensor.
\begin{definition}
Let $\ten{A} = (a_{i_1 i_2 \cdots i_m})$ be a tensor of order $m$ and dimension $n$. We call another tensor $\ten{M}(\ten{A}) = (m_{i_1 i_2 \cdots i_m})$ as the \emph{comparison tensor} of $\ten{A}$ if
$$
m_{i_1 i_2 \cdots i_m} =
\left\{
\begin{array}{ll}
+|a_{i_1 i_2 \cdots i_m}|, & \text{if } (i_2,i_3,\cdots,i_m) = (i_1,i_1,\cdots,i_1), \\
-|a_{i_1 i_2 \cdots i_m}|, & \text{if } (i_2,i_3,\cdots,i_m) \neq
(i_1,i_1,\cdots,i_1).
\end{array}
\right.
$$
\end{definition}

Then we can state what is an \htt-tensor.
\begin{definition}
We call a tensor an \emph{\htt-tensor}, if its comparison tensor is
an \mt-tensor; And we call it as a \emph{nonsingular \htt-tensor},
if its comparison tensor is a nonsingular \mt-tensor.
\end{definition}

Nonsingular \htt-tensors has a property called quasi-strictly diagonally dominant, which can be proved directly from the properties of nonsingular \mt-tensors. Therefore we will omit the proof.
\begin{proposition}
A tensor $\ten{A}$ is a nonsingular \htt-tensor if and only if it is \emph{quasi-strictly diagonally dominant}, that is, there exist $n$ positive real numbers $d_1,d_2,\cdots,d_n$ such that
$$
|a_{i i \cdots i}| d_i^{m-1} > \sum_{(i_2,i_3,\cdots,i_m) \neq
(i,i,\cdots,i)} |a_{i i_2 \cdots i_m}| d_{i_2} \cdots d_{i_m},\ i =
1,2,\cdots,n.
$$
\end{proposition}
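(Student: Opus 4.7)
The plan is to reduce the statement, via the definition of \htt-tensor, to the characterization of semi-positive \zt-tensors obtained earlier in the paper, applied to the comparison tensor $\ten{M}(\ten{A})$. The argument is essentially a two-line unpacking; no new machinery is needed.

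First I would observe that $\ten{M}(\ten{A})$ is automatically a \zt-tensor: by construction its diagonal entries are $+|a_{i i \cdots i}|$ and its off-diagonal entries are $-|a_{i i_2 \cdots i_m}| \le 0$. Hence, by the definition of nonsingular \htt-tensor, $\ten{A}$ is a nonsingular \htt-tensor if and only if the \zt-tensor $\ten{M}(\ten{A})$ is a nonsingular \mt-tensor.

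Next I would invoke Theorem~\ref{thm1}, which identifies nonsingular \mt-tensors with semi-positive \zt-tensors, together with Proposition~5, which states that a \zt-tensor is semi-positive if and only if it has all positive diagonal entries and there exists a positive diagonal matrix $D = \mathrm{diag}(d_1,\ldots,d_n)$ such that $\ten{M}(\ten{A}) D^{m-1}$ is strictly diagonally dominant. Writing this strict diagonal dominance out explicitly for $\ten{M}(\ten{A})$, and substituting the diagonal entries $+|a_{ii\cdots i}|$ and off-diagonal entries $-|a_{i i_2 \cdots i_m}|$, yields exactly
$$
|a_{ii\cdots i}| d_i^{m-1} > \sum_{(i_2,\ldots,i_m) \neq (i,\ldots,i)} |a_{i i_2 \cdots i_m}| d_{i_2} \cdots d_{i_m}, \quad i = 1,\ldots,n,
$$
which is the quasi-strict diagonal dominance condition in the statement. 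The "positive diagonal entries" clause of Proposition~5 is automatically subsumed, since the right-hand side above is nonnegative and a strict inequality therefore forces $|a_{ii\cdots i}| > 0$.

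I do not anticipate any real obstacle: the only delicate point is checking that the sign conventions in the definition of the comparison tensor line up so that $\ten{M}(\ten{A})$ is a \zt-tensor and the strict diagonal dominance of $\ten{M}(\ten{A}) D^{m-1}$ reads with absolute values on the entries of the original $\ten{A}$. Once this bookkeeping is done, the result is a direct corollary of Proposition~5, which justifies why the authors choose to omit the proof.
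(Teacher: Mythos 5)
Your proof is correct and is exactly the ``direct'' argument the paper alludes to when it writes that the proposition ``can be proved directly from the properties of nonsingular \mt-tensors'': apply the definition of nonsingular \htt-tensor to pass to $\ten{M}(\ten{A})$, use Theorem~\ref{thm1} to identify nonsingular \mt-tensors with semi-positive \zt-tensors, and then read off the strict diagonal dominance characterization of semi-positivity. The observation that the ``positive diagonal entries'' clause is automatic here is a nice touch and closes the only loose end.
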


Similarly to the nonsingular \mt-tensor, nonsingular \htt-tensor has other equivalent definitions.
\begin{proposition}
The following conditions are equivalent:
\begin{enumerate}[\rm ({E}1)]
  \item A tensor $\ten{A}$ is a nonsingular \htt-tensor;
  \item There exists a positive diagonal matrix $D$ such that $\ten{A}D^{m-1}$ is strictly diagonally dominant;
  \item There exist two positive diagonal matrix $D_1$ and $D_2$ such that $D_1\ten{A}D_2^{m-1}$ is strictly diagonally dominant.
\end{enumerate}
\end{proposition}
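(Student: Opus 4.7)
The plan is to reduce all three conditions to statements about the comparison tensor $\ten{M}(\ten{A})$, which by construction is a \zt-tensor: its diagonal entries are $|a_{i i \cdots i}| \geq 0$ and its off-diagonal entries are $-|a_{i i_2 \cdots i_m}| \leq 0$. By the definition of a nonsingular \htt-tensor, (E1) is equivalent to asserting that $\ten{M}(\ten{A})$ is a nonsingular \mt-tensor, so Theorem \ref{thm1} and Proposition 5 become directly applicable once I translate (E2) and (E3) into conditions on $\ten{M}(\ten{A})$.

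The bridge for this translation is the single observation that strict diagonal dominance depends only on magnitudes. For any positive diagonal matrix $D = \mathrm{diag}(d_1,\dots,d_n)$, the inequality
$$|a_{i i \cdots i}|\, d_i^{m-1} \;>\; \sum_{(i_2,\ldots,i_m)\neq(i,\ldots,i)} |a_{i i_2 \cdots i_m}|\, d_{i_2}\cdots d_{i_m}, \quad i=1,\ldots,n,$$
simultaneously expresses strict diagonal dominance of $\ten{A} D^{m-1}$ and of $\ten{M}(\ten{A}) D^{m-1}$, because $|m_{i i_2 \cdots i_m}| = |a_{i i_2 \cdots i_m}|$ entrywise. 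Hence (E2) for $\ten{A}$ is literally the second half of Proposition 5's condition applied to the \zt-tensor $\ten{M}(\ten{A})$. Combining this with Theorem \ref{thm1} (nonsingular \mt-tensor $\Leftrightarrow$ semi-positive \zt-tensor) and Proposition 5 immediately yields (E1) $\Leftrightarrow$ (E2).

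For (E2) $\Leftrightarrow$ (E3) I would mimic Proposition 6: the $(i,i_2,\ldots,i_m)$-entry of $D_1 \ten{A} D_2^{m-1}$ equals $d_{1,i}\, a_{i i_2 \cdots i_m}\, d_{2,i_2}\cdots d_{2,i_m}$, so dividing the $i$-th row of its strict-diagonal-dominance inequality through by $d_{1,i} > 0$ produces the strict-diagonal-dominance inequality for $\ten{A} D_2^{m-1}$. Thus the left-multiplier $D_1$ is irrelevant, and the converse is trivial by taking $D_1 = I$.

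The only subtlety worth checking carefully, and the step I would call out as the ``main obstacle,'' is the bookkeeping around the ``positive diagonal entries'' clause that appears in Proposition 5 but is absent from (E2)/(E3). This gap closes for free: the diagonal entries of $\ten{M}(\ten{A})$ are the absolute values $|a_{i i \cdots i}|$, and strict diagonal dominance of $\ten{M}(\ten{A}) D^{m-1}$ forces each of them to strictly exceed a nonnegative sum, hence to be strictly positive. So no side argument is required, and the proof reduces to a clean bookkeeping exercise built entirely on Theorem \ref{thm1} together with Propositions 5 and 6.
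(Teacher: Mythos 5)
Your proof is correct, and it fills in exactly the argument the paper leaves implicit (the paper omits the proof, saying only that it follows directly from the properties of nonsingular $\mathcal{M}$-tensors via the comparison tensor). The reduction of strict diagonal dominance of $\ten{A}D^{m-1}$ to that of $\ten{M}(\ten{A})D^{m-1}$, the invocation of Theorem \ref{thm1} and Propositions 5--6, and your observation that positivity of the diagonal of $\ten{M}(\ten{A})$ is forced by the dominance inequality itself are all sound and close the argument cleanly.
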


%%%%%%%%%%%%%%%%%%%%%%%%%%%%%%%%%%%%%%%%%%%%%%%%%%%%%%%%%%%%%%%%%%%%%%%%%%%%%%%%%%%%%%%%%%%%%%%%%%%%%%%%%%%%%%%%%%%%%%%%%%%%%%%%%%%%%%%%%%%%%%%%%%%%%%%%%%%%%%%%%%

\section{Conclusions}

In this paper, we give the proofs or the counterexamples to show
these three relations between different sets of tensors:
\[
\begin{split}
\{\text{semi-positive \zt-tensors}\} &= \{\text{nonsingular \mt-tensors}\}, \\
\{\text{semi-nonnegative \zt-tensors}\} &\subsetneqq \{\text{general \mt-tensors}\}, \\
\{\text{even-order monotone \zt-tensors}\} &\subsetneqq \{\text{even-order nonsingular \mt-tensors}\}.
\end{split}
\]
Applying these relations, we investigate the properties of
nonsingular and general \mt-tensors. Along with the results in Zhang
el al. \cite{Zhang12}, the equivalent conditions of nonsingular
\mt-tensors until now are listed as follows.\\

 If $\ten{A}$ is a
\zt-tensor, then the following conditions are equivalent:
\begin{enumerate}[({D}1)]
  \item $\ten{A}$ is a nonsingular \mt-tensor;
  \item Every real eigenvalue of $\ten{A}$ is positive; (\cite{Zhang12})
  \item The real part of each eigenvalue of $\ten{A}$ is positive; (\cite{Zhang12})
  \item $\ten{A}$ is semi-positive; that is, there exists $x > 0$ with $\ten{A} x^{m-1} > 0$;
  \item There exists $x \geq 0$ with $\ten{A} x^{m-1} > 0$;
  \item $\ten{A}$ has all positive diagonal entries, and there exists a positive diagonal matrix $D$ such that $\ten{A}D^{m-1}$ is strictly diagonally dominant;
  \item $\ten{A}$ has all positive diagonal entries, and there exist two positive diagonal matrices $D_1$ and $D_2$
  such that $D_1 \ten{A} D_2^{m-1}$ is strictly diagonally dominant;
  \item There exists a positive diagonal tensor $\ten{D}$ and a nonsingular \mt-tensor $\ten{C}$ with $\ten{A} = \ten{D}\ten{C}$;
  \item There exists a positive diagonal tensor $\ten{D}$ and a nonnegative tensor $\ten{E}$ such that $\ten{A} = \ten{D} - \ten{E}$ and there exists $x>0$ with $(\ten{D}^{-1}\ten{E})x^{m-1} < x^{[m-1]}$.
\end{enumerate}

\end{document}